\documentclass[11pt]{amsart}

\usepackage[margin=1.05in]{geometry}
\usepackage{amsmath,amssymb,amsthm,mathtools}
\mathtoolsset{showonlyrefs=true}
\usepackage{enumitem}
\usepackage{microtype}
\usepackage{hyperref}
\hypersetup{urlcolor=blue, citecolor=blue, linkcolor=blue, colorlinks=true}
\usepackage{tikz-cd}
\usepackage{tcolorbox}
\usepackage{csquotes}
\usepackage[nameinlink,noabbrev, capitalise]{cleveref}
\usepackage[style=alphabetic,maxnames=5,maxalphanames=5,doi=true,backend=bibtex,isbn=false,url=false]{biblatex}
\bibliography{ref.bib}
\renewbibmacro{in:}{}
\renewbibmacro*{issue+date}{%
  \ifboolexpr{not test {\iffieldundef{year}} or not test {\iffieldundef{issue}}}
    {\printtext[parens]{%
       \iffieldundef{issue}
         {\usebibmacro{date}}
         {\printfield{issue}%
          \setunit*{\addspace}%
          \usebibmacro{date}}}}
    {}%
  \newunit}

\usepackage{todonotes}

\numberwithin{equation}{section}

\newtheorem{theorem}{Theorem}[section]
\newtheorem{proposition}[theorem]{Proposition}
\newtheorem{lemma}[theorem]{Lemma}
\newtheorem{corollary}[theorem]{Corollary}

\newtheorem{remark}[theorem]{Remark}

\newtheorem{bigthm}{Theorem}
\newtheorem{bigcor}[bigthm]{Corollary}

\usepackage[nameinlink,noabbrev, capitalise]{cleveref}

\AddToHook{env/proposition/begin}{\crefalias{theorem}{proposition}}
\AddToHook{env/lemma/begin}{\crefalias{theorem}{lemma}}
\AddToHook{env/corollary/begin}{\crefalias{theorem}{corollary}}
\AddToHook{env/remark/begin}{\crefalias{theorem}{remark}}
\AddToHook{env/definition/begin}{\crefalias{theorem}{definition}}

\newcommand{\dist}{\operatorname{dist}}

\newcommand{\dd}{\,\mathrm d}

\newcommand{\R}{\mathbb R}
\newcommand{\Riem}{\mathcal{R}}
\newcommand{\calQ}{\mathcal{Q}}

\newcommand{\scpr}[1]{\left\langle#1\right\rangle}

\newcommand{\scal}{\mathrm{R}}
\newcommand{\Laplace}{\Delta}
\newcommand{\embeds}{\hookrightarrow}
\newcommand{\dvol}{\mathrm{dvol}}
\newcommand{\pt}{{\mathrm{pt}}}

\DeclareMathOperator{\Crit}{Crit}

\DeclareMathOperator{\codim}{codim}
\newcommand{\Q}{\mathcal Q}

\let\ge\geqslant
\let\geq\geqslant
\let\le\leqslant
\let\leq\leqslant

\title{Spaces of metrics with positive spectral scalar curvature}
\author{Gioacchino Antonelli}
\address[Gioacchino Antonelli]{Department of Mathematics, University of Notre Dame, Hurley Hall, 255 Hurley, Notre Dame, IN 46556, United States}
\email{gantonel@nd.edu}
\author{Georg Frenck}
\address[Georg Frenck]{Institut f\"ur Mathematik,
Universit\"at Augsburg, Universit\"atsstra{\ss}e 14,
86159 Augsburg, Germany}
\email{georg.frenck@math.uni-augsburg.de}
\author{Bernhard Hanke}
\address[Bernhard Hanke]{Institut f\"ur Mathematik,
Universit\"at Augsburg, Universit\"atsstra{\ss}e 14,
86159 Augsburg, Germany}
\email{hanke@math.uni-augsburg.de}
\date{\today}

\begin{document}
 
\begin{abstract}
Let $n\geq2$ and let $M^n$
be a closed connected smooth manifold. 
Let $\mathcal{R}^\gamma(M)$ be the space of smooth
Riemannian metrics $g$ on $M$ for which
the generalized conformal Laplace operator $-\gamma\Delta_g+\scal_g$ is strictly positive.

We prove that if $n=2$ and $\gamma>0$, or if $n\ge3$ and $0< \gamma \leq 4(n-1)/(n-2)$, the inclusion $\mathcal{R}^0(M)\hookrightarrow \mathcal{R}^\gamma(M)$ is a homotopy equivalence, thus generalizing, to all dimensions and in the maximal range, the results of Botvinnik--Rosenberg and Li--Mantoulidis.
Then, we prove that if $n\ge3$ and $\gamma>4(n-1)/(n-2)$, the space $\mathcal{R}^\gamma(M)$ is contractible, and hence nonempty.
This solves a homotopy-theoretic strengthening of a conjecture of Gromov \cite[Conjecture 3, Section 6.1.2]{GromovFourLectures} in the maximal possible coefficient range. 

Concerning Gromov's conjecture we also treat the equivariant case and the case of manifolds with boundary.
\end{abstract}
\maketitle

\tableofcontents 

\section{Introduction}

\noindent In this paper, all manifolds $M$ considered will be smooth and connected (possibly nonorientable). 
The term closed manifold stands for compact manifold without boundary. 
For $\gamma\in[0,\infty)$ and a compact manifold $M$ of dimension $n$, possibly with nonempty boundary, consider the space
\begin{equation}\label{eq:def-r-gamma}
  \Riem^{\gamma}(M)\coloneqq\{g\in\Riem(M)\mid  -\gamma\Delta_g + \scal_{g}>0\},
\end{equation}
where $\Riem(M)$ is the space of smooth Riemannian metrics on $M$ equipped with the $C^\infty$-topology, $\Delta_g$ is the (non-positive) Laplace operator and $\scal_{g}$ is the scalar curvature of the metric $g$. 
Throughout the paper, the notation $-\gamma\Delta_g+\scal_g>0$ means
\begin{equation}\label{eqn:-gammaDelta+r}
-\gamma\Delta_g+\scal_{g}>0\  \stackrel{\mathrm{def}}{\Longleftrightarrow}  \lambda_1^N(-\gamma\Delta_g+\scal_g)\coloneqq \inf_{0\not\equiv f\in C^\infty(M)}\frac{\int_M \left(\gamma|\nabla_g f|_g^2 + \scal_g f^2\right)\dvol_g}{\int_M f^2\dvol_g} >0.
\end{equation}
In the following, $\dvol_g$ denotes the $n$-dimensional Hausdorff measure. Note that when $\partial M\neq\varnothing$, $\lambda_1^N(L)$ denotes the first Neumann eigenvalue of $L$. From \eqref{eqn:-gammaDelta+r}, there are inclusions
\begin{equation}\label{eq:inclusion-of-R-gamma}
\Riem^{\gamma}(M)\embeds\Riem^{\gamma'}(M) \quad \textrm{ for } \gamma < \gamma' .
\end{equation}
Note that $\Riem^0(M)$ is the space of positive-scalar-curvature (PSC) metrics.

The spaces $\mathcal{R}^\gamma(M)$ appear naturally in geometric analysis. When $M$ is closed, for $n \geq 3$ and  $\gamma = \frac{4(n-1)}{n-2}$, the operator $- \gamma \Delta_g + \scal_g$ is the classical conformal Laplace operator, and the space $\Riem^{\gamma}(M)$ consists of the metrics whose conformal classes have positive Yamabe constant \cite{KazdanWarner}. At $\gamma=4$, the first eigenvalue of $-4\Delta_g+\scal_g$ is the Perelman
$\lambda$-functional \cite{PerelmanEntropy}. At $\gamma=2$, Li--Mantoulidis \cite[Lemma C.6]{LiMantoulidis2023}
identify $\mathcal R^2(M)$ with the space of metrics realizable as
induced metrics on two-sided stable minimal hypersurfaces in PSC
manifolds. Geometric inequalities for metrics $g\in \mathcal{R}^\gamma(M)$ have been studied, e.g., in \cite{HirschKazarasKhuriZhang}.

The following is our main result. In this paper, for all $n\geq 3$ let $\gamma_n:=\frac{4(n-1)}{n-2}$.
\begin{bigthm}\label{main:contractibility}
  Let $n\ge2$ and let $M$ be a closed $n$-dimensional manifold. 
  \begin{enumerate}
    \item[\rm(1)] If $n=2$ and $\gamma> 0$, or if $n\ge3$ and $0 <  \gamma\le\gamma_n$, the inclusion \(\Riem^0(M)\embeds\Riem^\gamma(M)\) is a homotopy equivalence.
    \item[\rm(2)] If $n\ge3$ and $\gamma>\gamma_n$, the space $\Riem^\gamma(M)$ is contractible.
  \end{enumerate}
\end{bigthm}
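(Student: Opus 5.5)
The plan for part (1) is to turn the first eigenfunction into a conformal factor. Take $g\in\Riem^\gamma(M)$ and let $u_g>0$ be the first eigenfunction of $-\gamma\Delta_g+\scal_g$, normalized by $\int_M u_g^2\dvol_g=1$, with eigenvalue $\lambda_g>0$. Since $\lambda_g$ is simple, $g\mapsto u_g$ is continuous in the $C^\infty$-topology. For $h=u^{2a}g$, the formula $\scal_{e^{2\varphi}g}=e^{-2\varphi}\bigl(\scal_g-2(n-1)\Delta\varphi-(n-1)(n-2)|\nabla\varphi|^2\bigr)$ with $\varphi=a\log u$ gives
\begin{equation}
u^{2a}\scal_{h}=\scal_g-2(n-1)a\frac{\Delta u}{u}+(n-1)a\bigl(2-(n-2)a\bigr)\frac{|\nabla u|^2}{u^2}.
\end{equation}
Choose $a$ so that $2(n-1)a=\gamma$. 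The gradient coefficient is nonnegative exactly when $a\le 2/(n-2)$, i.e. $\gamma\le\gamma_n$; for $n=2$ it equals $2a>0$, so there is no restriction. Under this condition $u^{2a}\scal_h\ge(-\gamma\Delta u+\scal_g u)/u=\lambda_g>0$. Hence $\Phi(g)\coloneqq u_g^{2a}g$ defines a continuous map $\Phi\colon\Riem^\gamma(M)\to\Riem^0(M)$.

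It remains to show that $\Phi$ is a homotopy inverse of the inclusion. I would use the path $h_t=u_g^{2ta}g$, $t\in[0,1]$, and aim to prove $h_t\in\Riem^{\gamma_t}(M)$ with $\gamma_t=2(n-1)ta\le\gamma$, so that $h_t\in\Riem^\gamma(M)$ by \eqref{eq:inclusion-of-R-gamma}. To do this, I would test the Rayleigh quotient of $h_t$ with $f=u^{-b}v$ for a suitable exponent $b$. After an integration by parts the result should reduce to the positivity of $-\gamma\Delta_g+\scal_g$ together with a quadratic form in $(\nabla v,v\nabla u/u)$, and this form should be nonnegative in the same range $\gamma\le\gamma_n$. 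This gives $\mathrm{incl}\circ\Phi\simeq\mathrm{id}$.

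For $\Phi\circ\mathrm{incl}\simeq\mathrm{id}$ on $\Riem^0(M)$, I need the homotopy to stay inside $\Riem^0(M)$. If $\scal_g>0$, the same path $h_t$ can be rearranged by a pointwise estimate to keep $\scal_{h_t}>0$, mixing in the constant function as test. If that fails, I would instead show that all relative homotopy groups $\pi_k(\Riem^\gamma,\Riem^0)$ vanish, deforming a disc family rel boundary by a cut-off of $t$. Whitehead's theorem then applies, since these spaces have the homotopy type of CW complexes.

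For part (2), the idea is that when $\gamma>\gamma_n$ the gradient term in the computation above can be made to dominate. I would show that for every $g$, the set $C_g=\{\varphi: e^{2\varphi}g\in\Riem^\gamma(M)\}$ is nonempty and contractible, and moreover that these choices depend continuously on $g$. Contractibility of $\Riem^\gamma(M)$ would then follow by contracting $\Riem(M)$ itself while correcting conformally along the way.

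The key computation is to write the Rayleigh quotient for $e^{2\varphi}g$ in terms of $f=e^{-b\varphi}v$. This produces $\scal_g$, $\gamma|\nabla v|^2$, and a term $c(\gamma,n,b)|\nabla\varphi|^2v^2$ plus cross terms, and I expect $c>0$ to be achievable precisely for $\gamma>\gamma_n$. The main obstacle is that on a closed manifold $|\nabla\varphi|$ cannot be large everywhere because $\varphi$ has critical points. So one must use highly oscillating factors, or factors built from many bumps, and control $\scal_g$ near critical points by the Dirichlet energy term $\gamma|\nabla v|^2$ via a local Poincaré or Hardy-type inequality. This is the step I expect to be hardest, and I would try it first with $\varphi=K\psi(Nx)$-type oscillations on a fine cover.
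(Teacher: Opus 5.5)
\paragraph{Part (2) has a genuine gap.} The step you flag as the hardest is the whole content of the theorem, and your plan gives no mechanism for it.

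The fiberwise reduction is not formal. The set $E=\{(g,\varphi)\mid e^{2\varphi}g\in\Riem^\gamma(M)\}$ is homeomorphic to $\Riem^\gamma(M)\times C^\infty(M)$. But its projection to $\Riem(M)$ is just a product projection restricted to an open set, with no a priori fibration property. So nonempty contractible fibers $C_g$ do not by themselves give contractibility of $E$.

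What you actually need is the relative statement: for a smooth family $g_x$, $x\in\mathbb D^k$, with $\varphi_x\in C_{g_x}$ prescribed for $x\in\mathbb S^{k-1}$, extend $\varphi_x$ over $\mathbb D^k$. Your phrase ``these choices depend continuously on $g$'' hides exactly this problem. In such a family the critical points of $\varphi_x$ are unavoidable and degenerate and bifurcate as $x$ varies. Without a genericity input they need not even be isolated: a straight-line interpolation between two functions can pass through functions with critical submanifolds or locally constant pieces.

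Oscillating factors only multiply the critical points. Even if a spectral (Hardy or harmonic-oscillator type) estimate near nondegenerate critical points could be made to work for a single $g$, nothing in your plan controls the bad set uniformly in the parameter.

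\paragraph{What the paper does instead.}
\begin{enumerate}
\item[(a)] It decouples the metric from the function. Your computation with $b=a=2(n-1)/\gamma$ is the identity $(-\gamma\Delta_g+\scal_g)(e^{-a\varphi})=e^{-(2+a)\varphi}(\scal_h+c_{n,\gamma}|\nabla_h\varphi|_h^2)$ for $g=e^{2\varphi}h$. By Barta's trick, $(h,\varphi)\mapsto e^{2\varphi}h$ therefore maps the \emph{pointwise} open set $\Q_c(M)=\{(h,\varphi)\mid\scal_h+c|\nabla_h\varphi|_h^2>0\}$ into $\Riem^\gamma(M)$. It has the right inverse $g\mapsto(u_g^{\gamma/(n-1)}g,-\tfrac{\gamma}{2(n-1)}\log u_g)$. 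Hence it suffices to contract $\Q_c(M)$, where $h$ and $\varphi$ can be varied independently.
\item[(b)] Near critical points, it does not fight $\scal_h<0$ with Dirichlet energy. It changes $h$ by parameter-dependent conformal factors $e^{2U}$ with $U\approx -A|y|^2$ on small balls. There $-\Delta U\geq L$ and $|\nabla U|^2\leq\eta(-\Delta U)$ force $\scal>0$. These factors are glued by a partition of unity in the parameter and switched off near the ends of the homotopy.
\item[(c)] Step (b) needs every member of the family to have finitely many critical points. This comes from Bertelson's foliated genericity theorem, applied to the foliation of (parameter space)$\times M$ by the slices $\{x\}\times M$. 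After that, a large rescaling $\Lambda\varphi$ makes $c\Lambda^2|\nabla\varphi|^2$ dominate off a neighborhood of the critical locus. Palais' theorem then upgrades weak contractibility to contractibility.
\end{enumerate}

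\paragraph{Part (1) is correct in outline but has one false intermediate claim.} You aim to show $h_t=u_g^{t\gamma/(n-1)}g\in\Riem^{t\gamma}(M)$. This fails for small $t$ whenever $\scal_g<0$ somewhere: a fixed bump near such a point has negative Rayleigh quotient for $-t\gamma\Delta_{h_t}+\scal_{h_t}$. At $t=0$ the claim would say $\Riem^\gamma(M)\subset\Riem^0(M)$.

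Aim directly at $\Riem^\gamma(M)$ instead. With $u=u_g$ and the Barta test function $u^{1-t}$, one computes
\[
u^{\frac{t\gamma}{n-1}}\,u^{t-1}\bigl(-\gamma\Delta_{h_t}+\scal_{h_t}\bigr)u^{1-t}=\lambda_1(L_g)+\gamma t(2-t)\Bigl(1-\tfrac{(n-2)\gamma}{4(n-1)}\Bigr)\frac{|\nabla_g u|_g^2}{u^2}>0 .
\]
For $g\in\Riem^0(M)$, your pointwise estimate becomes
\[
u^{\frac{t\gamma}{n-1}}\scal_{h_t}=(1-t)\scal_g+t\lambda_1(L_g)+t\gamma\Bigl(1-\tfrac{t(n-2)\gamma}{4(n-1)}\Bigr)\frac{|\nabla_g u|_g^2}{u^2}>0 .
\]
So $u_g^{\gamma/(n-1)}g$ is a homotopy inverse of the inclusion, and the Whitehead fallback is unnecessary.

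The paper reaches the same conclusion differently. It factors through the space $Z$ of pairs $(g,u)$ with $u>0$ and $L_gu>0$. It uses the arithmetic interpolation $(tu+1-t)^{\gamma/(n-1)}g$ with test function $u/(tu+1-t)$. This makes the argument formal and also shows $Z\simeq\Riem^\gamma(M)$. Your geometric interpolation works directly on $\Riem^\gamma(M)$.
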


\noindent We list some immediate consequences of \cref{main:contractibility}.

\begin{bigcor}\leavevmode
  \begin{enumerate}
    \item [a)]
    For $\gamma\ge0$ and a closed manifold $M$ of dimension at least $3$, the space $\Riem^{\gamma}(M)$ is nonempty if and only if either $\gamma\le \gamma_n$ and $M$ admits a PSC metric, or $\gamma>\gamma_n$.
    \item [b)]
    Let $n\geq 3$. \cref{main:contractibility}(2) confirms a conjecture of Gromov \cite[Conjecture 3, Section 6.1.2]{GromovFourLectures}, which predicts the existence of a constant $c_n\ge\gamma_n$ such that for all $\gamma>c_n$ and every closed $n$-manifold $M^n$, there is a smooth metric $g$ on $M$ such that $-\gamma\Delta_g+\scal_g\geq 0$. In fact \cref{main:contractibility}(2) shows that $c_n=\gamma_n$ works.
    \item [c)]
    Let $n\geq 3$. Let $0\leq\gamma <\gamma'$, and assume $\gamma'\leq\gamma_n$ or $\gamma>\gamma_n$. Then the inclusion
$\Riem^\gamma(M)\hookrightarrow\Riem^{\gamma'}(M)$
    is a homotopy equivalence.
  \end{enumerate}
\end{bigcor}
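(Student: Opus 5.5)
The corollary is read off from \cref{main:contractibility}, using the inclusions \eqref{eq:inclusion-of-R-gamma} and the two-out-of-three property of homotopy equivalences. I treat the three items separately.

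For a), I split by the size of $\gamma$. If $\gamma>\gamma_n$, then \cref{main:contractibility}(2) says $\Riem^\gamma(M)$ is contractible, and a contractible space is nonempty. If $0<\gamma\le\gamma_n$, then \cref{main:contractibility}(1) says $\Riem^0(M)\embeds\Riem^\gamma(M)$ is a homotopy equivalence. A homotopy equivalence induces a bijection on $\pi_0$, so one space is empty exactly when the other is. Hence $\Riem^\gamma(M)\neq\varnothing$ if and only if $M$ carries a PSC metric. The case $\gamma=0$ is the definition of $\Riem^0(M)$. Finally, $\gamma>\gamma_n$ and $\gamma\le\gamma_n$ are exclusive and cover all $\gamma\ge0$, which gives the stated dichotomy.

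For b), take $c_n=\gamma_n$ and fix $\gamma>\gamma_n$. By a), $\Riem^\gamma(M)$ contains some $g$. By definition \eqref{eqn:-gammaDelta+r}, $\lambda_1^N(-\gamma\Delta_g+\scal_g)>0$, so the operator is positive in the quadratic-form sense, which is in particular $\geq0$. If Gromov's formulation asks for the pointwise inequality $-\gamma\Delta_g u+\scal_g u\ge 0$ for some positive function $u$ (the spectral reading), take $u$ to be the positive first eigenfunction. This function exists because $M$ is closed, and it satisfies $-\gamma\Delta_g u+\scal_g u=\lambda_1 u>0$.

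For c), let $0\le\gamma<\gamma'$. The key observation is that the inclusions compose: $\Riem^0\embeds\Riem^\gamma\embeds\Riem^{\gamma'}$ composes to $\Riem^0\embeds\Riem^{\gamma'}$.
\begin{itemize}
\item Suppose $\gamma'\le\gamma_n$. If $\gamma=0$, this is \cref{main:contractibility}(1) directly. If $\gamma>0$, both $\Riem^0\embeds\Riem^\gamma$ and $\Riem^0\embeds\Riem^{\gamma'}$ are homotopy equivalences by (1). Two-out-of-three then makes $\Riem^\gamma\embeds\Riem^{\gamma'}$ a homotopy equivalence.
\item Suppose $\gamma>\gamma_n$. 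Then $\gamma'>\gamma_n$ as well, so both spaces are contractible by (2). Any map between contractible spaces is a homotopy equivalence.
\end{itemize}

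There is no genuine obstacle here. The only points needing care are the $\gamma=0$ edge cases and matching the precise form of Gromov's inequality in b).
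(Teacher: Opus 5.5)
Your proposal is correct and matches the paper's reasoning: the paper states these items as immediate consequences of \cref{main:contractibility} without writing out a proof. Nonemptiness transfers along the homotopy equivalence $\Riem^0(M)\embeds\Riem^\gamma(M)$ of part (1), or follows from contractibility in part (2). Item c) follows either from two-out-of-three applied to $\Riem^0(M)\embeds\Riem^\gamma(M)\embeds\Riem^{\gamma'}(M)$, or from both spaces being contractible.
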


The following remark relates \cref{main:contractibility} to some previous results.

\begin{remark}\leavevmode
\begin{enumerate}
\item[a)] For a closed manifold $M$ of dimension at least $3$ admitting a PSC metric, \cref{main:contractibility}(1) for $\gamma = \gamma_n$ is proved by Botvinnik--Rosenberg \cite[Theorem 3.4]{BotRos}. See also Akutagawa--Botvinnik \cite[Theorem 7.1]{AkutagawaBotvinnik-relative-yamabe-invariant}.
\item [b)] For a closed orientable  $3$-manifold $M$ admitting a PSC metric,
    Li--Mantoulidis \cite[Theorems 1.3--1.4]{LiMantoulidis2023}\footnote{
        Li--Mantoulidis consider the space $\{g \mid \lambda_1(-\Delta_g+k\scal_g)>0\}$, which in our notation corresponds to $\Riem^{1/k}(M)$.
      }prove that the space $\Riem^\gamma(M)$ is path-connected for $0\le\gamma\le4$ and contractible for $\gamma=8$.
    Together with \cite[Theorem 1.1]{BamlerKleiner2019}, \cref{main:contractibility} strengthens Li--Mantoulidis' result showing that $\Riem^\gamma(M)$ is contractible for all $\gamma \ge 0$. 
    
    Moreover, \cref{main:contractibility} answers the question raised in
\cite[p. 1833]{LiMantoulidis2023} of understanding the topology of the spaces $\mathcal{R}^\gamma(M)$. For previously related results when $M=\mathbb S^2$, see \cite{MantoulidisSchoen2015}.
\item [c)] For a closed manifold $M$  of dimension at least $4$ admitting a PSC metric, it is well-known that the homotopy type of the space of PSC metrics, $\Riem^{\gamma = 0}(M)$, is often highly nontrivial; see, for example,  \cite{BERW, CS, ERW, HSS14, Rub}. 
As we increase $\gamma$, according to \cref{main:contractibility}, the homotopy type of $\Riem^{\gamma}(M)$ remains unchanged until $\gamma$ reaches the threshold of $\gamma_n$, after which a ``phase transition'' happens and it abruptly collapses for larger values of $\gamma$.
\item[d)] It makes sense to define $\Riem^{\gamma = \infty}(M)$ as the space of metrics $g$ for which $- \Delta_g$ is strictly positive. 
Then we have $\Riem^{\infty}(M) = \varnothing$.
\end{enumerate}
\end{remark}   
\smallskip 

Concerning the conjecture of Gromov, we also have the following existence result for manifolds with boundary and  metrics that are invariant under group actions.  
Let us recall our sign convention for the second fundamental form: 
if $\nu$ is the unit outward normal to $\partial M$, and $X,Y\in T\partial M$, then 
\[
\mathrm{II}_g(X,Y)
  \coloneqq g(\nabla^g_X\nu,Y).
\]

\begin{bigthm}\label{main:existence-with-invariance-and-boundary}
    Let $n\geq 3$, and let $\gamma>\gamma_n$. Let $\Gamma$ be a compact Lie group,  and let $M^n$ be a compact $n$-dimensional $\Gamma$-manifold, possibly with nonempty boundary. 
    If $\partial M=\varnothing$, assume in addition that the $\Gamma$-action is not transitive. If $\partial M\neq \varnothing$, let $q$ be a $\Gamma$-invariant Riemannian metric on $\partial M$, and let
$k$ be a $\Gamma$-invariant symmetric $(0,2)$-tensor on $\partial M$.

    Then there exists a $\Gamma$-invariant metric $g$ on $M$ that satisfies
    \[
    -\gamma\Delta_g+\scal_g>0, \qquad {\rm II}_g =k, \qquad g|_{T\partial M}=q.
    \]
    (The boundary conditions are omitted when $\partial M=\varnothing$.)
\end{bigthm}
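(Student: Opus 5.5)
We use the conformal substitution $\tilde g=u^{4/(n-2)}g$ with $u>0$. A positive first eigenfunction is not available on an arbitrary $M$, so instead we build $g$ directly as a warped-type metric in which one direction has very large, concentrated negative Laplacian.

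\emph{Step 1 (reduction to a test function).} Note that $-\gamma\Delta_g+\scal_g>0$ holds as soon as there is a smooth $\phi>0$ with $-\gamma\Delta_g\phi+\scal_g\phi>0$ pointwise on $M$ and $\partial_\nu\phi\ge0$ on $\partial M$. To see this, substitute $f=\phi w$ and integrate by parts; the boundary term has the right sign, and the Rayleigh quotient is bounded below by $\min\bigl((-\gamma\Delta\phi+\scal\phi)/\phi\bigr)>0$. So it suffices to produce a $\Gamma$-invariant metric $g$ with the prescribed boundary data together with a $\Gamma$-invariant positive $\phi$ satisfying this pointwise inequality.

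\emph{Step 2 (conformal computation).} Fix any $\Gamma$-invariant background metric $h$ with $h|_{T\partial M}=q$ and ${\rm II}_h=k$; such an $h$ exists by a collar construction followed by averaging over $\Gamma$. Take $g=e^{2v}h$ and $\phi=e^{av}$, where $v$ is $\Gamma$-invariant. Write $g$ via $u^{4/(n-2)}$, with $\scal_g=u^{-(n+2)/(n-2)}(-\gamma_n\Delta_h u+\scal_h u)$. A direct computation gives
\[
e^{2v}\,\frac{-\gamma\Delta_g\phi+\scal_g\phi}{\phi} \;=\; \scal_h - c_1(a)\,\Delta_h v - c_2(a)\,|dv|_h^2,
\]
where $c_1=2(n-1)+\gamma a$ and $c_2=(n-1)(n-2)+\gamma a^2+\gamma a(n-2)$. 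We want $-c_2$ to be large and positive. The quadratic $\gamma a^2+\gamma(n-2)a+(n-1)(n-2)$ has minimum value $(n-2)\bigl((n-1)-\gamma(n-2)/4\bigr)$, which is negative exactly when $\gamma>\gamma_n$. This is where the threshold enters. Choose $a$ at the minimizer, so that $c_2<0$.

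\emph{Step 3 (choice of $v$).} It now suffices to find $\Gamma$-invariant $v$ with $-c_1\Delta_h v+|c_2||dv|^2+\scal_h>0$ and $v=0$, $\partial_\nu v=0$ near $\partial M$, so that the boundary data are untouched and $\partial_\nu\phi=0$. Take $v=\lambda\psi$. The inequality becomes $\lambda^2|c_2||d\psi|^2-\lambda c_1\Delta\psi+\scal_h>0$. This holds for $\lambda$ large wherever $d\psi\ne0$, but it fails at critical points of $\psi$ and on the collar where $\psi$ is constant. The remedy is to take $\psi=\log$ of something or an exponential composite $\psi=F(\rho)$, so that the term $\lambda^2|c_2|F'^2|d\rho|^2$ dominates $\lambda F''$ and $\lambda F'\Delta\rho$.

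\emph{Main obstacle.} The difficulty is handling the critical set of $\psi$ and the boundary. We need a $\Gamma$-invariant function $\rho$ without critical points, apart from regions where $\scal_h$ can be made positive by other means. When $\partial M\ne\varnothing$, a $\Gamma$-invariant Morse-type function with no interior critical points does not exist in general. Instead we use the non-transitivity hypothesis, or the boundary itself, as follows. Choose a $\Gamma$-invariant handle-type decomposition. Near the critical orbits of a $\Gamma$-invariant function $\rho$, we first alter $h$ to have large positive scalar curvature. This is possible because the critical orbits have positive codimension: for non-transitive actions we may take the critical orbits to be minima and maxima of codimension $\ge1$, and tubes around them can be given a torpedo-like PSC geometry by a Gromov--Lawson type bending, provided the codimension is at least $3$. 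In lower codimension we instead push the critical set to the boundary, or use that the orbit space is at least one-dimensional. Away from these neighborhoods, taking $\lambda$ large settles the inequality. Near $\partial M$, we arrange $\rho$ to be the boundary distance, which has no critical points. There we let $v$ be flat to infinite order at the boundary, using $F$ with $F'^2\gg|F''|$, for instance $F(t)=e^{-1/t}$-type profiles, and we check carefully that $\lambda^2F'^2$ dominates near $t=0$. This balancing near the critical orbits and near the boundary is the step I expect to require the most work.
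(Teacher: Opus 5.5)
\textbf{Gap: the choice of $a$.} Your framework is the right one: Barta's trick with the test function $\phi=e^{av}$ for $g=e^{2v}h$, with the threshold $\gamma_n$ appearing as the condition that $c_2(a)<0$ for some $a$. Taking $a$ at the minimizer of $c_2$ is what breaks the argument. At $a=-(n-2)/2$ you get $c_1=2(n-1)-\gamma(n-2)/2<0$. So at a critical orbit of $\psi$ your expression equals $\scal_h+\lambda|c_1|\Delta_h\psi$. At a maximum orbit of $\psi$, which always exists on closed $M$, one has $\Delta_h\psi<0$ in the nondegenerate case. The expression is then $\scal_h-\lambda|c_1||\Delta_h\psi|$, and on a neighborhood of size about $\lambda^{-1/2}$ the gradient term cannot compensate. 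Hence ``first make $\scal_h$ large near the critical orbits, then take $\lambda$ large'' fails, because the required size of $\scal_h$ grows with $\lambda$. Any $a$ with $c_1\neq0$ has this problem, at maxima or at minima.

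The missing idea is to take $a=-2(n-1)/\gamma$. Then $c_1=0$, while $c_2=4(n-1)^2/\gamma-(n-1)(n-2)=-c_{n,\gamma}$ is still negative exactly when $\gamma>\gamma_n$. The condition becomes the first-order pointwise inequality $\scal_h+c_{n,\gamma}|dv|_h^2>0$, which is the paper's key identity. At critical points of $v$ you then only need $\scal_h>0$, independently of the scale $\lambda$.

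\textbf{Critical orbits.} Even with this fix, your handling of the critical set does not work as stated. No Gromov--Lawson bending or codimension $\geq 3$ is needed, and pushing critical orbits to the boundary is unavailable when $\partial M=\varnothing$. The paper takes an invariant Morse function $f$, whose critical orbits $\mathcal O_i$ have codimension $d_i\geq1$ precisely by non-transitivity. It sets $h=e^{2\psi}h_0$ with $\psi=-\Lambda\sum_i\chi_i r_i^2$. Since $\psi=0$, $d\psi=0$ and $\Delta_{h_0}\psi=-2\Lambda d_i$ on $\mathcal O_i$, this gives $\scal_h=\scal_{h_0}+4\Lambda(n-1)d_i$ on $\mathcal O_i$, hence $\scal_h>0$ near $\Crit(f)$. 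All remaining negativity, including that created by the bump, is then absorbed by $c|d(Af)|_h^2$ for $A$ large, because $h$ is fixed before $A$.

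\textbf{Boundary.} Suppose $v$ vanishes near $\partial M$ (your Step 3) or is flat there (your last paragraph). Then the inequality on $\partial M$ reduces to $\scal_h>0$, which an arbitrary invariant $h$ with data $(q,k)$ need not satisfy. Moreover, for $F(t)=e^{-1/t}$ one has $\lambda^2F'^2/(\lambda F'')=\lambda e^{-1/t}/(1-2t)\to0$ as $t\to0$, so the domination you propose to check goes the other way.

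What is needed is the collar metric $dt^2+q-2tk-\Lambda t^2q$. It realizes $(q,k)$ and has positive scalar curvature near $\partial M$ for $\Lambda$ large and $t$ small. The paper glues it to an invariant PSC metric from B\"ar--Hanke, so that $\{\scal_h\le0\}$ lies in a compact part of the collar where $dt\neq0$. It then takes $\varphi=AF(t)$ with $F\equiv0$ near $\partial M$ and $F'>0$ on that set. Alternatively, your interior Morse argument would also work once $c_1=0$ and the collar is PSC.
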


\begin{remark} If $M^n$ is a torus $T^n$ and $\Gamma=T^n$ all $\Gamma$-invariant metrics are flat, so we cannot have $-\gamma\Delta+\scal>0$.
This shows that we cannot drop the assumption that the $\Gamma$-action is not transitive in the above theorem.
\end{remark}

\subsection{Strategy of the proof}
The proof of \cref{main:contractibility} reflects two distinct conformal mechanisms. In the proof of \cref{main:contractibility}(1), when $\gamma>0$, we introduce the space of positive supersolutions
\[
Z := \bigl\{(g,u)\in \mathcal R^\gamma(M)\times C^\infty(M,(0,\infty))\mid (-\gamma\Delta_g+\scal_g)u>0\bigr\}.
\]
On the one hand, we notice that the projection $Z\to\mathcal R^\gamma(M)$ is a homotopy equivalence, with homotopy inverse $g\mapsto (g,u_g)$, where $u_g$ is the positive first eigenfunction of $-\gamma\Delta_g+\scal_g$ with unit $L^2$-norm. On the other hand, we show that the conformal map
\[
(g,u)\longmapsto u^{\gamma/(n-1)}g
\]
is homotopy inverse to the map $\mathcal R^0(M)\to Z$, $g\mapsto(g,1)$. For this last computation, it is crucial that $\gamma\leq\gamma_n$ when $n\geq 3$. From the previous two properties, the proof of \cref{main:contractibility}(1) readily follows. See \cref{sec:A1} for the detailed proof following this outline.
\smallskip

Moving on, one of the crucial points of the proof of \cref{main:contractibility}(2) is the following identity. 
Writing $g=e^{2\varphi}h$ and $a:=2(n-1)/\gamma$, we have
\begin{equation}\label{eq:key-identity-intro}
\bigl(-\gamma\Delta_g+\scal_g\bigr)(e^{-a\varphi})
=
e^{-(2+a)\varphi}
\bigl(\scal_h+c_{n,\gamma}|\nabla_h\varphi|_h^2\bigr),
\end{equation}
where $
c_{n,\gamma}:=(n-1)(n-2)-\frac{4(n-1)^2}{\gamma}$ is positive because $\gamma>\gamma_n$; see \eqref{eqn:KeyRgammaMtoQcM}.

The latter identity \eqref{eq:key-identity-intro}, together with Barta's trick (see \cref{cor:Barta}), allows us to construct the continuous map $\mathcal Q_{c_{n,\gamma}}(M)\to \mathcal R^\gamma(M)$, $(h,\varphi)\mapsto e^{2\varphi}h$; here, for every $c>0$ we set
\[
\mathcal Q_c(M):=
\bigl\{(h,\varphi)\in\mathcal R(M)\times C^\infty(M)\mid
\scal_h+c|\nabla_h\varphi|_h^2>0\bigr\}.
\]
Conversely, using the positive first eigenfunction of $-\gamma\Delta_g+\scal_g$ with unit $L^2$-norm, we produce a continuous map $\mathcal{R}^\gamma(M)\to \mathcal{Q}_{c_{n,\gamma}}(M)$ in the opposite direction whose composition back to $\mathcal R^\gamma(M)$ is the identity; see \cref{sec:loctoglob}. Hence, to prove that $\mathcal{R}^\gamma(M)$ is contractible, it suffices to prove that $\mathcal Q_{c_{n,\gamma}}(M)$ is contractible. In this way, we translated the global condition that defines $\mathcal{R}^\gamma(M)$ to the local, thus more tractable, condition that defines $\mathcal Q_{c_{n,\gamma}}(M)$.
\smallskip

The fact that $\mathcal{Q}_c(M)$ is contractible when $c>0$ is the core and most technically demanding part of the proof of \cref{main:contractibility}(2). The proof is given in \cref{sec:qc-contractible}. The essential  input is a foliated genericity theorem of Bertelson \cite[Proposition~3.21]{Bertelson2002}, see \cref{thm:bertelson-isolated}, whose proof rests on Thom's jet transversality theorem, Mather's finite determinacy theorem, and classical results of Whitney on real algebraic sets. 

The proof that $\mathcal{Q}_c(M)$ is contractible when $c>0$ goes roughly as follows. We want to show that for every $k\geq 0$, every continuous map $(g,\varphi):\mathbb S^{k-1}\to\mathcal Q_c(M)$ extends to a continuous map $\mathbb D^k\to \mathcal Q_c(M)$. 
First, we prove that $\mathcal Q_c(M)\neq\varnothing$, thus addressing the case $k=0$. 
Hence, let us assume from now on $k>0$. 
After preliminary smoothing, one can regard this map as a compact (smooth) family $(g_\xi,\varphi_\xi)_{\xi\in\mathbb S^{k-1}}$, and interpolate it (in $\mathcal{R}(M)\times C^\infty(M)$) to a fixed base point in $\mathcal{Q}_c(M)$. 

The resulting interpolation $(g_{\xi,t},\varphi_{\xi,t})_{(\xi,t)\in\mathbb S^{k-1}\times[0,1]}$ need not remain in $\mathcal Q_c(M)$, so we have to correct it. 
Bertelson’s theorem allows us to perturb the function component $\varphi_{\xi,t}\leadsto \widehat \varphi_{\xi,t}$ so that the critical points of each $\widehat\varphi_{\xi,t}$ are isolated and therefore finite. 
Consequently, a parameter-dependent conformal deformation $g_{\xi,t}\leadsto \widehat g_{\xi,t}$ (see \cref{lem:local-improvement} after \cite{BaerHankeLocalFlexibility}) makes the scalar curvature positive on an open neighborhood $\Omega$ of the (compact) locus $C$ of the critical points
\[
 C=\{((\xi,t),m)\in \mathbb S^{k-1}\times[0,1]\times M\mid\mathrm{d}\widehat\varphi_{\xi,t}(m)=0\}.
\]
On the complement of $\Omega$, $|\nabla_{\widehat g_{\xi,t}}\widehat\varphi_{\xi,t}|$ is uniformly bounded below, so a sufficiently large rescaling of $\widehat\varphi_{\xi,t}$ makes $c|\nabla_{\widehat g_{\xi,t}}\widehat\varphi_{\xi,t}|^2$ dominate the negative part of the scalar curvature. 

Finally, if the perturbation $\widehat\varphi_{\xi,t}$ is chosen sufficiently close to $\varphi_{\xi,t}$, any resulting discrepancy in the endpoint data can be repaired by straight-line homotopies inside $\mathcal Q_c(M)$; see \cref{sec:qc-contractible} for a detailed proof. 
This shows that for every $k> 0$, every continuous map $\mathbb S^{k-1}\to\mathcal Q_c(M)$ extends to a continuous map $\mathbb D^k\to\mathcal{Q}_c(M)$, and thus $\mathcal Q_c(M)$ is weakly contractible. 
Since it is an open metrizable Fréchet manifold, a  theorem of Palais implies that it is contractible, concluding the proof.
\smallskip

The proof of \cref{main:existence-with-invariance-and-boundary} crucially exploits the key identity \eqref{eq:key-identity-intro} above, and the results of \cite{BaerHankeLocalFlexibility} and \cite{BaerHankeBoundaryConditions} to construct a $\Gamma$-invariant metric $h$ and a $\Gamma$-invariant function $\varphi$ such that $\scal_h+c_{n,\gamma}|\nabla_h\varphi|_h^2>0$ (with fixed boundary data if $\partial M\neq\varnothing$); compare with \cref{thm:mainnew2} and \cref{thm:mainnew}. We refer the reader to \cref{sec:THMB} for a detailed proof.
\medskip

\noindent\textbf{Acknowledgments.} G.A. has been partially supported by the NSF DMS Grant No. 2550590. G.A. wishes to thank M. Gromov for pointing out \cite[Conjecture 3, Section 6.1.2]{GromovFourLectures}. This project was initiated when G.A. was visiting G.F. and B.H. at the University of Augsburg.
G.A. is grateful for the hospitality and perfect working conditions. 
\smallskip

The authors used GPT-5.6 Sol Pro for help with language editing, TeX formatting, routine algebraic checking, and locating potentially relevant literature. All mathematical claims and proofs in this paper have been written by the authors and are entirely the authors' responsibility.

\section{Proof of \texorpdfstring{\cref{main:contractibility}}{Theorem A}(1)}\label{sec:A1}

In this section, we give the proof of \cref{main:contractibility}(1).
We shall assume that $n = 2$ and $\gamma>0$, or $n\ge3$ and $0<\gamma\leq \frac{4(n-1)}{n-2}$. We start by introducing the auxiliary space
\[ 
Z\coloneqq \{(g,u)\in\Riem^\gamma(M)\times C^\infty(M,(0,\infty))\mid -\gamma\Delta_gu + \scal_{g} u>0\}.
\]
We observe that the first positive and normalized (with $L^2$ norm $=1$) eigenfunction $u_g$ of the operator $L_g\coloneqq-\gamma\Laplace_g + \scal_g$ depends continuously on the metric $g$.
More precisely, for $g\in\Riem^\gamma(M)$ let $u_g$ be the unique function (see \cref{cor:Barta}) satisfying
\begin{equation}\label{eqn:UniqueNormalized}
  u_g>0, \qquad L_g u_g = \lambda_1(L_g)u_g, \qquad \int_M u_g^2\mathrm{d}\mathrm{vol}_g = 1.
\end{equation}
The following fact is well-known. 
A short proof will be provided in \cref{appendix:cont_dep}.

\begin{proposition}\label{prop:first-eigenfunction-cts-dependence}
  Let $\gamma>0$. For a closed manifold $M$, the assignment $\Riem^\gamma(M)\to C^\infty(M)$, $g\mapsto u_g$, is continuous.
\end{proposition}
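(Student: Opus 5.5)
We will prove continuity by showing that if $g_j\to g$ in $C^\infty$ inside $\Riem^\gamma(M)$, then $u_{g_j}\to u_g$ in $C^\infty(M)$. Since $\Riem(M)$ carries a metrizable topology, sequential continuity suffices. The argument has three parts: spectral continuity, compactness from elliptic estimates, and uniqueness of the limit.

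\textbf{Step 1: continuity of $\lambda_1$.} Write $L_j=-\gamma\Laplace_{g_j}+\scal_{g_j}$ and $\lambda_j=\lambda_1(L_j)$. On a compact set of metrics near $g$, the volume forms and the metrics themselves are uniformly equivalent. Moreover, $\scal_{g_j}\to\scal_g$ uniformly. So the Rayleigh quotients in \eqref{eqn:-gammaDelta+r} for $g_j$ and for $g$ differ by factors of the form $(1+o(1))$ plus an additive $o(1)$, uniformly in $f$. Taking infima gives $\lambda_j\to\lambda_1(L_g)$.

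\textbf{Step 2: uniform bounds.} By normalization, $\|u_{g_j}\|_{L^2(g_j)}=1$. The Rayleigh quotient then bounds $\|\nabla u_{g_j}\|_{L^2}$, using $\gamma>0$ and the boundedness of $\scal_{g_j}$. The operators $L_j$ are uniformly elliptic, and their coefficients are bounded in every $C^k$ norm. Elliptic regularity for $L_j u_{g_j}=\lambda_j u_{g_j}$, bootstrapped through $H^s$ (or Schauder estimates), therefore gives uniform bounds $\|u_{g_j}\|_{C^k}\le C_k$ for every $k$. By Arzel\`a--Ascoli and a diagonal argument, every subsequence has a further subsequence converging in $C^\infty$ to some $u$.

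\textbf{Step 3: identifying the limit.} Passing to the limit in the equation gives $L_g u=\lambda_1(L_g)u$, together with $u\ge0$ and $\int_M u^2\,\dvol_g=1$. The normalization passes to the limit because the volume forms converge uniformly. By the uniqueness statement invoked in \eqref{eqn:UniqueNormalized} (cf.\ \cref{cor:Barta}), the limit is $u=u_g$. The strong maximum principle or Harnack inequality gives $u>0$ from $u\ge0$ and $u\not\equiv0$; equivalently, the first eigenspace is one-dimensional and spanned by a positive function. Since every subsequence has a further subsequence converging to the same limit $u_g$, the whole sequence converges, which proves continuity.

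\textbf{Main obstacle.} The step needing the most care is obtaining the $C^k$ estimates uniformly in $j$. I would argue in a fixed finite atlas. In local coordinates, $L_j$ has coefficients $g_j^{ab}$, their derivatives, and $\scal_{g_j}$, all converging in $C^\infty$. The constants in the interior elliptic estimates depend only on the ellipticity constant and on finitely many coefficient norms, so they are uniform in $j$. Everything else in the argument is routine.
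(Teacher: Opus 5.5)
Your argument is correct, but it takes a different route from the paper's proof.

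\textbf{The paper's route.} The paper uses the Banach-space implicit function theorem. For $s>n/2$ it considers $\mathcal F_s(g,v,\lambda)=\bigl(L_gv-\lambda v,\ \int_M v^2\,\dvol_g-1\bigr)$ on $H^{s+2}$-metrics and $H^{s+2}$-functions. Its linearization in $(v,\lambda)$ at $(g_0,u_{g_0},\lambda_1(L_{g_0}))$ is an isomorphism $H^{s+2}(M)\times\R\to H^s(M)\times\R$ precisely because the first eigenspace is $\R u_{g_0}$. This gives smooth local branches $g\mapsto(v_s(g),\lambda_s(g))$. These are identified with $(u_g,\lambda_1(L_g))$ in two steps: shrink the neighborhood so that $v_s(g)>0$ (via $H^{s+2}\embeds C^0$), then pair against the positive first eigenfunction using self-adjointness. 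Taking $s$ large gives $C^r$-continuity for every $r$.

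\textbf{Comparison.} Your compactness-plus-uniqueness argument uses the same key input, namely simplicity of the first eigenvalue with a positive eigenfunction. In your proof it pins down the subsequential limit; in the paper it gives invertibility of the linearization.

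The paper's route buys more than is needed: local smooth dependence of $u_g$ and $\lambda_1(L_g)$ on $g$ in Sobolev topologies, and continuity of $\lambda_1$ for free. Its cost is the Fredholm/Banach-manifold setup and checking that the branches agree across Sobolev indices.

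Your route is softer and more elementary. It needs only min-max, elliptic estimates that are uniform over the $C^\infty$-compact set $\{g_j\}\cup\{g\}$, and Arzel\`a--Ascoli, and it yields exactly continuity. You correctly flag that uniformity of the elliptic constants is the one point needing care.

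\textbf{Simplification.} Your Step 1 is dispensable. Extract a subsequence with $\lambda_j\to\mu$. The limit $u\geq0$ with $\int_M u^2\,\dvol_g=1$ is positive by the strong maximum principle. Then $(\mu-\lambda_1(L_g))\int_M u\,u_g\,\dvol_g=0$ forces $\mu=\lambda_1(L_g)$. This is exactly the pairing trick the paper uses to identify its implicit-function branches.

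\textbf{Minor ordering point.} In Step 3, establish $u>0$ (or combine one-dimensionality of the eigenspace with $u\geq 0$) before invoking the uniqueness of \eqref{eqn:UniqueNormalized}, which is stated for positive functions.
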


Let $p:Z\to \Riem^\gamma(M)$ be defined as $p(g,u)\coloneqq g$. For every $g\in\Riem^\gamma(M)$, by \cref{cor:Barta} we have $(g,u_g)\in Z$. By the continuity of the map $g\mapsto u_g$ (see \cref{prop:first-eigenfunction-cts-dependence}), the map
\[
    s:\Riem^\gamma(M)\longrightarrow Z,
    \qquad
    s(g)\coloneqq (g,u_g),
\]
is continuous. Moreover, it is a section of $p$, that is,
$p\circ s=\mathrm{id}_{\Riem^\gamma(M)}$.

We claim that $s\circ p$ is homotopic to the identity on $Z$. Define the continuous map
\begin{equation}\label{eqn:KHomotopy}
    K:[0,1]\times Z\longrightarrow Z,
    \qquad
    K(t,(g,u))
    \coloneqq 
    \bigl(g,(1-t)u+tu_g\bigr).
\end{equation}
If $(g,u)\in Z$, then
both $u$ and $u_g$ are positive and therefore $
    (1-t)u+tu_g>0$ for every $t\in [0,1]$.
Moreover, by the linearity of $L_g$,
\[
    L_g\bigl((1-t)u+tu_g\bigr)
    =(1-t)L_gu+tL_gu_g
    =(1-t)L_gu+t\lambda_1(L_g)u_g
    >0.
\]
Thus, $K(t,(g,u))\in Z$ for every $t\in[0,1]$ and every $(g,u)\in Z$, proving the well-definedness of $K$. At the endpoints of the homotopy $K$, we have $
    K(0,\cdot)=\mathrm{id}_Z$
and $
    K(1,\cdot)=s\circ p$.
    
Hence, we conclude that $p$ is a homotopy equivalence, with homotopy inverse $s$. Thus, in order to complete the proof of \cref{main:contractibility}(1), it is enough to prove that
\begin{equation}\label{defPsi}
    \Psi:\Riem^0(M)\longrightarrow Z,
    \qquad
    \Psi(g)\coloneqq (g,1),
\end{equation}
is a homotopy equivalence. Indeed, if this is true, we have that the inclusion 
$$p\circ\Psi=\iota:\Riem^0(M)\hookrightarrow\Riem^\gamma(M)$$
is a homotopy equivalence, too, as desired.

\medskip

Let us prove that \eqref{defPsi} is a homotopy equivalence.
We claim that the map
\[
 \Phi \colon Z \to \Riem^{0}(M), \qquad (g,u)\mapsto u^{\frac\gamma{n-1}}g
\]
is well-defined and homotopy inverse to $\Psi$.

Recall the conformal change formula for the scalar curvature of the metric $\tilde g = e^{2\varphi}g$
\begin{equation}\label{eq:conformal-change-scalar-curvature}
  \scal_{\tilde g} = e^{-2\varphi}\left(\scal_g-2(n-1)\Delta_g\varphi-(n-2)(n-1)|\nabla_g \varphi|_g^2\right).
\end{equation}
For $\varphi = \tfrac{\gamma}{2(n-1)}\log(u)$ and thus $\tilde g = e^{2\varphi}g = u^{\gamma/(n-1)}g$ this becomes
\begin{align*}
  \scal_{\tilde g} ={}& u^{-\frac{\gamma}{n-1}}\left(\scal_g-\gamma\frac{\Laplace_g u}{u} + \gamma\frac{|\nabla_gu|^2}{u^2} -\frac{\gamma^2(n-2)}{4(n-1)}\frac{|\nabla_g u|_g^2}{u^2}\right)\\
    ={}&u^{-\frac{\gamma}{n-1}}\left(\frac{-\gamma\Laplace_gu + \scal_g\cdot u}{u} + \gamma\left(1-\frac{\gamma(n-2)}{4(n-1)}\right)\frac{|\nabla_g u|_g^2}{u^2}\right).
\end{align*}
When $n\geq 3$, since we have $0< \gamma\le\frac{4(n-1)}{n-2}$, the term $\left(1-\tfrac{\gamma(n-2)}{4(n-1)}\right)$ is nonnegative, and thus for $(g,u)\in Z$ we have the following
\begin{align*}
  \begin{split}
    \scal_{u^{\gamma/(n-1)}g} \ge{}& u^{-\frac{\gamma}{n-1}-1}(-\gamma\Delta_g u + \scal_g\cdot u) >0.
  \end{split}
\end{align*}  
We conclude that  $u^{\gamma/(n-1)}g\in \Riem^0(M)$, and $\Phi$ is indeed well-defined. Similarly, when $n=2$ and $\gamma\geq 0$ we have $u^{\gamma/(n-1)}g\in\Riem^0(M)$, and thus $\Phi$ is also well-defined in this case.

The composition $\Phi\circ\Psi$ is the identity on $\Riem^0(M)$.
To conclude the proof that $\Phi$ is homotopy inverse to $\Psi$, we claim that the identity on $Z$ is homotopic to the composition $\Psi\circ\Phi$ via the  homotopy\footnote{At the value $n=3,\gamma=8$, compare the following homotopy $H$ and \eqref{eqn:KHomotopy} with
\cite[Proof of Theorem 1.4]{LiMantoulidis2023}.}
\[H\colon [0,1]\times Z\to Z,\quad (t,(g,u))\mapsto \left((tu+1-t)^{\frac{\gamma}{n-1}}g,\ \frac{u}{tu+1-t}\right).
\]
It remains to show that $H$ is well-defined. For $t \in [0,1]$, $(g,u)\in Z$, we denote
\begin{equation}\label{eqn:ToShow}
   u_t \coloneqq  tu+1-t, \quad g_t \coloneqq  u_t^{\gamma/(n-1)}g,
\end{equation}
It is enough to show that
\begin{equation}\label{eqn:ToShow2}
-\gamma\Delta_{g_t}\left(\frac{u}{u_t}\right) + \scal_{g_t}\cdot \left(\frac{u}{u_t}\right)>0. 
\end{equation}
In this way, using \eqref{eqn:ToShow2} and \cref{cor:Barta}, we conclude that $H(t,(g,u))\in Z$ for every $t\in[0,1]$, and $(g,u)\in Z$, as desired.

Let us prove \eqref{eqn:ToShow2}. Since $\nabla_g u_t = t\nabla_gu$, we obtain the following
\begin{align*}
  \begin{split}
    \scal_{g_t} ={}& u_t^{-\frac{\gamma}{n-1}}\Biggl(\frac{-\gamma\Delta_g u_t + \scal_{g}u_t}{u_t} + \gamma\left(1-\frac{\gamma(n-2)}{4(n-1)}\right)\frac{|\nabla_g u_t|^2}{u_t^2}\Biggr)\\
      ={}& u_t^{-\frac{\gamma}{n-1}}\Biggl( \frac{-\gamma t\Delta_g u}{u_t} + \scal_{g} + \gamma\left(1-\frac{\gamma(n-2)}{4(n-1)}\right)\frac{t^2|\nabla_g u|^2}{u_t^2}\Biggr) \\
    \nabla_{g}\left(\frac{u}{u_t}\right) ={}& \frac{u_t\nabla_g u - u\nabla_g u_t}{u_t^2} =  \frac{u_t\nabla_g u - t u\nabla_g u}{u_t^2} = \frac{(1-t)}{u_t^2}\nabla_g u.
  \end{split}
\end{align*}
Furthermore,
\begin{align}
  \begin{split}
\Delta_{g}\left(\frac{u}{u_t}\right) ={}& (1-t) \, {\rm div}\left(\frac{1}{u_t^2}\nabla_g u\right) = (1-t)\left(\frac{1}{u_t^2}\Delta_g u - \frac{2\scpr{\nabla_gu,\nabla_gu_t}}{u_t^3}\right)\\
      ={}& (1-t)\left(\frac{1}{u_t^2}\Delta_g u - \frac{2t|\nabla_gu|^2}{u_t^3}\right).
  \end{split}
\end{align}
Recall the conformal change formula for the Laplacian of a metric $\tilde g = e^{2\varphi}g$: for every smooth function $\psi$ we have
\begin{equation}\label{eq:conformal-change-laplace-operator}
  \Laplace_{\tilde g}\psi = e^{-2\varphi}\left(\Laplace_g\psi + (n-2)\scpr{\nabla\varphi,\nabla\psi}\right).
\end{equation}
For $\varphi = \tfrac{\gamma}{2(n-1)}\log(u_t)$, thus $\tilde g = g_t$, this becomes
\begin{align*}
  \Laplace_{g_t}\psi = u_t^{-\frac{\gamma}{n-1}}\left( \Laplace_g\psi + \frac{\gamma(n-2)}{2(n-1)u_t}\scpr{\nabla_gu_t,\nabla_g\psi}\right)= u_t^{-\frac{\gamma}{n-1}}\left(\Laplace_g\psi + \frac{\gamma(n-2)t}{2(n-1)u_t}\scpr{\nabla_gu,\nabla_g\psi}\right),
\end{align*}
and therefore, substituting $\psi=u/u_t$, we obtain the following
\begin{align}
  \begin{split}
    -\gamma\Delta_{g_t}\left(\frac{u}{u_t}\right) ={}& -\gamma u_t^{-\frac{\gamma}{n-1}}\left(\Delta_g\left(\frac{u}{u_t}\right) + \frac{\gamma(n-2)t}{2(n-1)u_t}\scpr{\nabla_g u,\nabla_g \left(\frac{u}{u_t}\right)}\right)\\
      ={}& -\gamma u_t^{-\frac{\gamma}{n-1}}\left( (1-t)\left(\frac{1}{u_t^2}\Delta_g u - \frac{2t|\nabla_gu|^2}{u_t^3}\right) +  \frac{\gamma(n-2)}{4(n-1)}\frac{2t(1-t)}{u_t^3}|\nabla_gu|^2\right) \\
      ={}& u_t^{-\frac{\gamma}{n-1}}\left( \frac{-\gamma(1-t)\Delta_g u}{u_t^2} + \gamma\left(1-\frac{\gamma(n-2)}{4(n-1)}\right)\frac{2t(1-t)|\nabla_gu|^2}{u_t^3}\right).
  \end{split}
\end{align}
Hence,
\begin{align}
  \begin{split}
    -\gamma\Delta_{g_t}\left(\frac{u}{u_t}\right) &+ \scal_{g_t}\left(\frac{u}{u_t}\right)\\
      ={}&u_t^{-\frac{\gamma}{n-1}}\left( \frac{-\gamma(1-t)\Delta_g u}{u_t^2} + \gamma\left(1-\frac{\gamma(n-2)}{4(n-1)}\right)\frac{2t(1-t)|\nabla_gu|^2}{u_t^3}\right)  \\
      &+u_t^{-\frac{\gamma}{n-1}}\left( \frac{-\gamma tu\Delta_g u}{u_t^2} + \frac{\scal_{g}u}{u_t} + \gamma\left(1-\frac{\gamma(n-2)}{4(n-1)}\right)\frac{t^2u|\nabla_g u|^2}{u_t^3}\right)\\
      ={}&u_t^{-\frac{\gamma}{n-1}}
        \Biggl( -\gamma\underbrace{\left(1-t + tu\right)}_{=u_t}\frac{\Delta_g u}{u_t^2} + \frac{\scal_{g}u}{u_t}\\
        &\qquad\qquad+ \gamma \underbrace{\left(1-\frac{\gamma(n-2)}{4(n-1)}\right)}_{\ge0}\underbrace{(2t(1-t)+t^2 u)}_{\ge0}\frac{|\nabla_gu|^2}{u_t^3}\Biggr) \\
      \ge{}&u_t^{-\frac{\gamma}{n-1}-1}
      \big(-\gamma \Delta_g u +\scal_{g}u \big) >0,
  \end{split}
\end{align}
thus proving \eqref{eqn:ToShow2}.
The last inequality is true because $u$ is a positive smooth function satisfying $-\gamma \Delta_g u +\scal_{g}u>0$.
Thus, $H(t,(g,u))\in Z$ for all $t\in[0,1]$ and all $(g,u)\in Z$.
We conclude that $H$ is a well-defined homotopy. 
This completes the proof of \cref{main:contractibility}(1).
\hfill$\Box$

\section{Proof of \texorpdfstring{\cref{main:contractibility}}{Theorem A}(2)} 

The proof of \cref{main:contractibility}(2) is carried out in two steps. 
\begin{itemize}
    \item First, we translate the global condition $g\in\Riem^\gamma(M)$ into a local one. This is done in \cref{sec:loctoglob}.
    \item Second, in \cref{sec:qc-contractible} we prove that the localized version $\Q_c(M)$ of $\Riem^\gamma(M)$, defined as in \eqref{eqn:QcmYeah}, is contractible.
\end{itemize}

\subsection{Global to local}\label{sec:loctoglob}
One of the difficulties in working with $\Riem^\gamma(M)$ is the fact that the condition $-\gamma\Laplace_g+\scal_g>0$ is not a local condition but rather a global condition.
To circumvent this, we replace $\Riem^\gamma(M)$ with a local counterpart.
More precisely, we define for $c>0$ the space
\begin{equation}\label{eqn:QcmYeah}
 \Q_c(M)\coloneqq \{(h,\varphi)\in \Riem(M)\times C^\infty(M)\mid \scal_h+c|\nabla_h\varphi|_h^2>0\}.
\end{equation}
We prove the following result in \cref{sec:qc-contractible}.
\begin{theorem}\label{thm:main}
Let $M$ be a closed smooth manifold of dimension $n\geq 2$. For every $c>0$, the space $\Q_c(M)$ is contractible.
\end{theorem}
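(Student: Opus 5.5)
The plan is to show that $\Q_c(M)$ is weakly contractible and then upgrade to contractibility. Since $\Q_c(M)$ is an open subset of the Fréchet manifold $\Riem(M)\times C^\infty(M)$, it is a metrizable Fréchet manifold, and Palais' theorem then gives contractibility. So it suffices to show two things: $\Q_c(M)\neq\varnothing$, and every continuous map $\mathbb S^{k-1}\to\Q_c(M)$, $k\ge1$, extends over $\mathbb D^k$.

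\textbf{Nonemptiness.} Choose a Morse function $\varphi$ on $M$; its critical set $C$ is finite. Using the local flexibility result \cref{lem:local-improvement}, after \cite{BaerHankeLocalFlexibility}, deform an arbitrary metric conformally near $C$ so that it has positive scalar curvature on a neighborhood $\Omega\supset C$. On the compact set $M\setminus\Omega$ we have $|\nabla_h\varphi|_h\ge\delta>0$. Replacing $\varphi$ by $\lambda\varphi$ with $\lambda^2c\delta^2>\max|\scal_h|$ then gives $\scal_h+c|\nabla_h(\lambda\varphi)|^2>0$ everywhere.

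\textbf{Extension step, setup.} Approximate the given map, within the open set $\Q_c(M)$, by a smooth family $(g_\xi,\varphi_\xi)_{\xi\in\mathbb S^{k-1}}$ homotopic to it; this is possible because $\Q_c(M)$ is open. Fix a base point $(h_0,\varphi_0)\in\Q_c(M)$ and take the straight-line interpolation
\[
(g_{\xi,t},\varphi_{\xi,t})=\bigl((1-t)g_\xi+th_0,\,(1-t)\varphi_\xi+t\varphi_0\bigr)
\]
in $\Riem(M)\times C^\infty(M)$. This is a family over the disk, viewed as $\mathbb S^{k-1}\times[0,1]$ with $\mathbb S^{k-1}\times\{1\}$ collapsed, but it need not stay in $\Q_c(M)$. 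Apply Bertelson's foliated genericity theorem (\cref{thm:bertelson-isolated}) to the family of functions, regarded as a function on the total space foliated by the fibers $M$. This yields a $C^\infty$-small perturbation $\widehat\varphi_{\xi,t}$ each of whose members has only isolated, hence finitely many, critical points. The critical locus $C\subset\mathbb D^k\times M$ is then compact and meets each fiber in a finite set.

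\textbf{Extension step, correction.} Apply a parametrized version of \cref{lem:local-improvement} to conformally deform $g_{\xi,t}$ to $\widehat g_{\xi,t}$ with positive scalar curvature on an open neighborhood $\Omega$ of $C$. The deformation should be continuous in the parameters and trivial near $\partial\mathbb D^k$, or at least small there. On the compact complement of $\Omega$, $|\nabla_{\widehat g}\widehat\varphi|$ is bounded below uniformly by some $\delta>0$. Rescaling $\widehat\varphi$ by a large constant $\lambda$ therefore puts the whole family in $\Q_c(M)$. Rescaling by $\lambda$ is harmless on the boundary, because $s\mapsto(g,(1+s(\lambda-1))\varphi)$ stays in $\Q_c(M)$ whenever $(g,\varphi)\in\Q_c(M)$ and $\lambda\ge1$. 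Finally, glue a collar connecting the original boundary family to the modified one. On the boundary the perturbations are small, so straight-line homotopies between the original and perturbed data remain in the open set $\Q_c(M)$. This gives the extension over $\mathbb D^k$.

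\textbf{Main obstacle.} The hardest step is the parametrized local improvement near the critical locus. Bertelson only guarantees isolated critical points fiberwise, and these may be degenerate and may move and bifurcate with the parameter. So the conformal deformation must be performed on a neighborhood of the whole compact set $C$ continuously in $(\xi,t)$, while remaining compatible with the boundary data. My first attempt would be to cover $C$ by finitely many product charts $U_i\times B_i$ with $B_i\subset M$ small balls, do the Bär--Hanke deformation in each with a partition of unity in the parameter, and use that positivity of scalar curvature is preserved under these local flexible deformations.
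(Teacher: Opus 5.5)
Your proposal is correct and follows the paper's proof essentially step for step: Palais' theorem; a Morse base point made admissible via \cref{lem:local-improvement} and rescaling; straight-line interpolation; Bertelson's theorem (\cref{cor:finite-critical-path}); the relative parametrized conformal improvement of \cref{lem:local-improvement}, whose hypothesis over $\partial\mathbb D^k$ holds because $\widehat\varphi_\xi$ is $C^1$-close to $\varphi_\xi$, so that $\scal_{g_\xi}>0$ at its critical points; a uniform rescaling; and straight-line repairs at the boundary. The only deviation is that you collapse $\mathbb S^{k-1}\times\{1\}$ before perturbing. This spares you the paper's terminal-end conditions on $\psi_{\xi,1}$ and its final homotopies back to $(g_0,\Lambda_0 f)$. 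The small cost is that, before invoking Bertelson, you must reparametrize radially so that the disk family is constant, hence smooth, near the center of $\mathbb D^k$.
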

  
We first explain how \cref{thm:main} implies the contractibility of $\Riem^\gamma(M)$.
To this end, we recall the conformal transformation formula for the scalar curvature and the Laplace operator.
If $g=e^{2\varphi}h$, then
\begin{align}\label{eq:conformal-scalar}
\scal_{g}={}&e^{-2\varphi}\Bigl(\scal_h-2(n-1)\Delta_h \varphi-(n-1)(n-2)|\nabla_h\varphi|_h^2\Bigr)\\
\Delta_g\cdot ={}&e^{-2\varphi}\left(\Delta_h\cdot+(n-2)h(\nabla_h\cdot,\nabla_h\varphi)\right).
\end{align}
We observe that for $(h,\varphi)\in\Riem(M)\times C^\infty(M)$ and $a\in\R$ we have 
\begin{align*}
-&\gamma\Delta_g(e^{-a\varphi})+\scal_g e^{-a\varphi} \\
&=-\gamma e^{-2\varphi}\left(\Delta_h(e^{-a\varphi})+(n-2)h(\nabla_h (e^{-a\varphi}),\nabla_h\varphi)\right)+\scal_ge^{-a\varphi} \\
&=e^{-(2+a)\varphi}\left(\scal_h+(\gamma a-2(n-1))\Delta_h\varphi+(-\gamma a^2+a\gamma(n-2)-(n-2)(n-1))|\nabla_h\varphi|_h^2\right).
\end{align*}
Thus, if we choose $a\coloneqq\frac{2(n-1)}{\gamma}$, the term with $\Delta_h\varphi$ cancels out, and we obtain
\begin{align*}
-\gamma\Delta_g(e^{-a\varphi})+\scal_ge^{-a\varphi} = e^{-(2+a)\varphi}\left(\scal_h+\left(-\frac{4(n-1)^2}{\gamma}+(n-1)(n-2)\right)|\nabla_h\varphi|_h^2\right).
\end{align*}
Note that 
\begin{equation}\label{eqn:cngamma}
c_{n,\gamma}\coloneqq-\frac{4(n-1)^2}{\gamma}+(n-1)(n-2)>0,
\end{equation}
because $\gamma>\gamma_n$.
Hence, simplifying, we get the key identity
\begin{equation}\label{eqn:KeyRgammaMtoQcM}
-\gamma\Delta_g(e^{-a\varphi})+\scal_g e^{-a\varphi} = e^{-(2+a)\varphi}\left(\scal_h + c_{n,\gamma}|\nabla_h\varphi|^2\right),
\end{equation}
which finally gives the following
\[
(h,\varphi)\in \calQ_{c_{n,\gamma}} \Rightarrow e^{2\varphi}h\in\Riem^\gamma(M),
\]
 using \eqref{eqn:KeyRgammaMtoQcM} and \cref{cor:Barta}. 
We have thus constructed a continuous map 
\[
\Psi\colon\Q_{c_{n,\gamma}}(M)\to\Riem^\gamma(M),\qquad (h,\varphi)\mapsto e^{2\varphi}h.
\]
In order to construct a continuous map from  $\Riem^\gamma(M)$ to $\Q_{c_{n,\gamma}}(M)$, we consider the normalized first eigenfunction of $L_{g,\gamma}\coloneqq-\gamma\Delta_g+\scal_g$.
If $g\in \Riem^\gamma(M)$ let $u_g$ be the normalized first eigenfunction of $L_g$ (see \eqref{eqn:UniqueNormalized}).
We note that the assignment $g\mapsto u_g$ is continuous by \cref{prop:first-eigenfunction-cts-dependence}.

Writing $\varphi\coloneqq-\tfrac{\gamma}{2(n-1)} \log(u_g) = -\tfrac{1}{a}\log(u_g)$ and $h\coloneqq e^{-2\varphi} g$, \eqref{eqn:KeyRgammaMtoQcM} gives
\begin{align*}
  e^{-(2+a)\varphi}\left(\scal_{h} + c_{n,\gamma}|\nabla_h\varphi |_{h}^2\right) ={}& -\gamma\Delta_g(e^{-a\varphi})+\scal_ge^{-a\varphi}\\
    ={}& -\gamma\Delta_g(u_g)+\scal_gu_g = \lambda_1(L_g) u_g >0.
\end{align*} 
Therefore, we obtain a well-defined continuous map 
\[
\Xi\colon\Riem^\gamma(M)\to\Q_{c_{n,\gamma}}(M),\qquad g\mapsto \left(u_g^{\frac{\gamma}{n-1}} g,\ -\frac{\gamma}{2(n-1)}\log(u_g)\right).
\]
Finally, we note that
\[
\Psi(\Xi(g)) = \Psi\left(u_g^{\frac{\gamma}{n-1}} g,\  -\frac{\gamma}{2(n-1)}\log(u_g)\right) = e^{-2\frac{\gamma}{2(n-1)}\log(u_g)}u_g^{\frac{\gamma}{n-1}} g = g,
\]
which implies that $\Psi\circ\Xi={\rm id}_{\Riem^\gamma(M)}$.

By \cref{thm:main}, the space $\Q_c(M)$ is contractible and thus $\Q_c(M)$ is nonempty and every self-map of $\Q_c(M)$ is homotopic to the identity,
in particular, so is $\Xi\circ\Psi$.
Thus, $\Xi$ and $\Psi$ are mutual homotopy inverses. Hence, $\Riem^\gamma(M)$ is contractible. This proves \cref{main:contractibility}(2).
    
\subsection{The space \texorpdfstring{$\Q_c(M)$}{QcM} is contractible}\label{sec:qc-contractible}
We now begin the proof of \cref{thm:main}.
This proof will rely on two propositions, which we prove in \cref{sec:local-scalar} and \cref{sec:parametric-input}. Let $k\geq 0$.
Consider the following extension problem
\[\begin{tikzcd}
    \mathbb S^{k-1} \arrow[rr, "{(g,\varphi)}"] \arrow[d, hook]
      & & \Q_c(M) \\
    \mathbb D^k \arrow[rru,dashed] 
\end{tikzcd}\]
that is, we aim to show that every continuous map $(g,\varphi)\colon \mathbb S^{k-1}\to\Q_c(M)$, $\xi \mapsto(g_\xi,\varphi_\xi)$, can be extended to a continuous map $\mathbb D^k\to\mathcal{Q}_c(M)$.
This will prove weak contractibility, as it implies that all homotopy groups are trivial.
Hence, \cite[Theorem~15, p.~5]{Palais1966}, which asserts that a metrizable manifold $X$ is
contractible if and only if it is weakly contractible, finishes the proof.
We note that $\mathbb S^{-1}=\varnothing$ and hence for $k=0$, the existence of such an extension above proves that the space $\Q_c(M)$ is nonempty.
\medskip

We first note that, when $k>0$, in the above diagram it is enough to extend smooth maps $\mathbb S^{k-1}\to\mathcal{Q}_c(M)$. Indeed,
let $$F:\mathbb S^{k-1}\to \mathcal{Q}_c(M)\subset\mathcal{R}(M)\times C^\infty(M)$$ be a continuous map $\xi \mapsto (g_\xi,\varphi_\xi)$. 
By smoothing in the parameter $\xi$, the map $F$ can be
approximated, uniformly in $\xi$,  by a smooth map $\widetilde F:\mathbb S^{k-1}\to \mathcal{R}(M)\times C^\infty(M)$, $\xi\mapsto (\widetilde g_\xi,\widetilde\varphi_\xi)$.

Since $(g,\varphi)\mapsto \scal_g+c|\nabla_g\varphi|_g^2$ is continuous, compactness gives $$\eta:=\min_{(\xi,m)\in\mathbb S^{k-1}\times M}\bigl(\scal_{g_\xi}(m)+c|\nabla_{g_\xi}\varphi_\xi|_{g_\xi}^2(m)\bigr)>0.$$
Choosing $\widetilde F$ sufficiently close to $F$, if we set $g_{\xi,s}:=(1-s)g_\xi+s\widetilde g_\xi$ and $\varphi_{\xi,s}:=(1-s)\varphi_\xi+s\widetilde\varphi_\xi$ for $(\xi,s)\in\mathbb S^{k-1}\times [0,1]$, we have $$\scal_{g_{\xi,s}}(m)+c|\nabla_{g_{\xi,s}}\varphi_{\xi,s}|_{g_{\xi,s}}^2(m)\geq \eta/2,\quad \textrm{ for all } (\xi,s)\in \mathbb S^{k-1}\times [0,1],\, m\in M.$$
Thus, $(\xi,s)\mapsto (g_{\xi,s},\varphi_{\xi,s})$ is a homotopy in $\mathcal Q_c(M)$ from $F$ to the smooth map $\widetilde F$. 
Hence, from now on we may assume without loss of generality that the map $(g,\varphi):\mathbb S^{k-1}\to \mathcal Q_c(M)$ we aim to extend is smooth.
\medskip

We start by fixing once and for all a Morse function $f\colon M\to\R$ and a metric $g_0$.
After applying the following proposition to the case $X=\{\pt\}$, $X_0=\varnothing$ and $\psi_\pt=f$, we may assume that there exists a neighborhood $U\subset M$ of the critical points $\Crit(f)$ of $f$ such that $\scal_{g_0}>0$ on $U$. Thus (we will prove this below in \eqref{eq:basepoint}) there exists a sufficiently large constant $\Lambda_0\geq 1$ such that $(g_0,\Lambda_0f)\in \mathcal{Q}_c(M)$. Hence, $\mathcal{Q}_c(M)\neq\varnothing$.
\begin{proposition}\label{lem:local-improvement}
Let $X$ be a compact smooth manifold, possibly with boundary, and let
$X_0\subset X$ be a closed subset. Let $g_x$, $x\in X$, be a smooth family of Riemannian metrics on $M^{n\geq 2}$,
and let $\psi_x$, $x\in X$, be a smooth fiberwise finite-critical family of functions (namely, for every $x\in X$, $\psi_x$ has only finitely many critical points). Put
\[
 C=\{(x,m)\in X\times M\mid\mathrm{d}\psi_x(m)=0\}.
\]
Assume that
\begin{equation}\label{eq:positive-on-relative-critical-set}
 \scal_{g_x}(m)>0
 \qquad
 \text{for every }(x,m)\in C\text{ with }x\in X_0.
\end{equation}
Then there is a smooth family of metrics $G_x$, an open neighborhood $\Omega\subset X\times M$ of
$C$, and an open neighborhood $U_0\subset X$ of $X_0$, such that
\begin{enumerate}[label=\textup{(\roman*)}]
\item $G_x=g_x$ for all $x\in U_0$;
\item $\scal_{G_x}>0$ on $\Omega_x\coloneqq \{m\in M\mid(x,m)\in\Omega\}$ for every $x\in X$.
\end{enumerate}
\end{proposition}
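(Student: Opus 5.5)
Since every $\psi_x$ has only finitely many critical points, $C$ is a closed subset of the compact space $X\times M$, hence compact, and each fiber $C_x$ is finite. The plan is to cover $C$ by finitely many small product neighborhoods. On each of them we apply a parametrized local conformal deformation that makes the scalar curvature positive near a point. Then we glue these deformations with cutoffs in the parameter $x$, keeping the metric unchanged near $X_0$. The local input is the flexibility result of \cite{BaerHankeLocalFlexibility}: for any metric $g$ and point $p\in M$, there is a metric $\hat g$ that agrees with $g$ outside an arbitrarily small ball around $p$ and has $\scal_{\hat g}>0$ on a smaller ball around $p$. The deformation is conformal, of the form $\hat g=e^{2u}g$ with $u$ highly concentrated near $p$, so that the term $-2(n-1)\Delta_g u$ in \eqref{eq:conformal-scalar} dominates. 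The construction depends only on finitely many derivatives of $g$ and on the choice of $u$. We will use a parametric version: for a smooth family $g_x$ and a point $p$, one can choose $u_x$ depending smoothly on $x$ near a given $x_*$.

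\emph{Step 1 (local charts).} Fix $(x_*,p)\in C$. If $x_*\in X_0$, then by \eqref{eq:positive-on-relative-critical-set} and continuity, $\scal_{g_x}>0$ on a product neighborhood $V\times B$ of $(x_*,p)$, and no modification is needed there. If $x_*\notin X_0$, choose a neighborhood $V$ of $x_*$ with $\overline V\cap X_0=\varnothing$ and a ball $B=B(p,r)$. Take a conformal factor $u$, supported in $B$, whose Laplacian term makes $\scal_{e^{2u}g_x}\ge 1$ on $B(p,r/2)$ uniformly for $x\in\overline V$. Uniformity is available because $\overline V$ is compact and all the relevant quantities are continuous in $x$. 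By compactness of $C$, finitely many such sets $V_i\times B_i$ (with the corresponding half-balls $B_i'$) cover $C$, and $\Omega$ will be contained in their union.

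\emph{Step 2 (gluing, the main obstacle).} The deformations coming from different charts may overlap in $M$ for the same $x$, and conformal changes do not simply add positivity. We handle this in two ways. First, we shrink the charts so that, for each $x$, the balls $B_i$ with $x\in V_i$ that are attached to distinct critical points of $\psi_x$ are disjoint. This is possible because $C_x$ is finite, and after refining, nearby points of $C$ are grouped into a single chart. Second, we use cutoff functions $\chi_i(x)$ subordinate to the cover $\{V_i\}$, vanishing near $X_0$. We then set $G_x=\exp\bigl(2\sum_i\chi_i(x)u_i\bigr)g_x$, which is smooth in $x$ and equals $g_x$ on a neighborhood $U_0$ of $X_0$, giving (i).

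The delicate point is that $\chi_i<1$ weakens the positivity produced by chart $i$. To avoid this, we arrange the cutoffs so that near each point of $C$ with $x\notin U_0'$ (for a slightly larger neighborhood $U_0'$) some single $\chi_i$ equals $1$ and the other active $u_j$ vanish on the relevant ball. Near $X_0$ we instead rely on the original positivity of $\scal_{g_x}$. In the transition region between these two regimes we need $\scal_{e^{2tu}g}>0$ for $t\in[0,1]$ at points where $\scal_g>0$ already. Since $e^{2tu}g$ is not monotone in $t$, we would first try choosing the local deformation so that positivity persists along the whole conformal path $t\mapsto e^{2tu}g$, for instance by taking $u$ with $-\Delta u\ge 0$-type concavity near $p$. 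Failing that, we would shrink $V_i$ near $X_0$ so that the transition region lies where $\scal_{g_x}$ is very positive and the perturbation $tu$ is small in $C^2$.

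\emph{Step 3.} Finally, let $\Omega$ be the union, over the good charts, of the sets $\{(x,m): x\in V_i^\circ,\ m\in B_i'\}$ with $\chi_i=1$, together with the small neighborhood of $C\cap(U_0'\times M)$ on which positivity is preserved. This is an open neighborhood of $C$, and on each fiber $\Omega_x$ we have $\scal_{G_x}>0$, which is (ii).
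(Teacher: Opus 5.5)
Your Step~2 does not work, and Step~2 is the heart of the proposition. You require that near each $(x,m)\in C$ with $x\notin U_0'$ a single $\chi_i$ equals $1$ while every other active $u_j$ vanishes near $m$. This cannot be arranged in general. Take for instance $X=S^1$, $X_0=\varnothing$, $M=T^2$ and $\psi_x(\theta_1,\theta_2)=\cos(\theta_1-x)+\cos\theta_2$. The critical point $m(x)=(x,0)$ runs once around a non-contractible loop.

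The reason is as follows. $B_i'$ and $M\setminus\operatorname{supp}u_i$ are disjoint open sets. Wherever $\chi_i>0$, your condition forces $m(x)$ into one of them: either chart $i$ handles it, or $u_i$ vanishes near it. So on a component $K$ of $\{\chi_i>0\}$ where chart $i$ handles $m(x)$ at one parameter, $m(K)\subset B_i'$. Then chart $i$ must handle it throughout $K$, so $\chi_i\equiv 1$ on $K$. Hence $K$ is open and closed, i.e.\ $K=X$, and one small ball would contain the whole loop. Cutoffs on a connected parameter space must take intermediate values, and there a critical point lies in the good region of two active bumps at once; your scheme has no tool for that.

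Your charts also have a second problem. They are attached to single points of $C$, so nothing prevents a critical point of $\psi_x$, $x\in V_i$, from sitting in the annulus $B_i\setminus B_i'$, where $u_i$ can make the curvature very negative. Finally, of your two options near $X_0$, only the first can work: the bumps need a large Laplacian, so $tu$ is not $C^2$-small for $t$ near $1$.

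The missing idea is that overlapping conformal factors are harmless. By \eqref{eq:conformal-scalar}, the quantity $e^{2u}\scal_{e^{2u}g}=\scal_g-2(n-1)\Delta_g u-(n-1)(n-2)|\nabla_g u|_g^2$ is pointwise concave in $u$. Hence for $t_\alpha\ge0$ with $\sum_\alpha t_\alpha\le 1$ and $U=\sum_\alpha t_\alpha u_\alpha$,
$e^{2U}\scal_{e^{2U}g}\ge(1-\sum_\alpha t_\alpha)\scal_g+\sum_\alpha t_\alpha e^{2u_\alpha}\scal_{e^{2u_\alpha}g}$.

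The paper uses this in quantitative form. Bumps $-A|y|^2$ on small balls satisfy $-\Delta_{g_x}u\ge L$ and $|\nabla_{g_x}u|^2\le\eta(-\Delta_{g_x}u)$ uniformly in $x$ (\cref{lem:local-bump}), and both conditions survive convex combinations. For this to help, every active factor must be good at every critical point. So the charts must be indexed by parameter sets, not by points of $C$:
\begin{enumerate}
\item For $x_\alpha\notin U_0$, let $u_\alpha$ be a sum of disjointly supported bumps centred at all critical points of $\psi_{x_\alpha}$.
\item Shrink $V_\alpha\ni x_\alpha$ so that $\Crit(\psi_x)$ stays inside the inner balls for all $x\in V_\alpha$.
\item Set $U_x=\sum_\alpha\chi(x)\mu_\alpha(x)u_\alpha$. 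Here $\{\mu_-,\mu_\alpha\}$ is a partition of unity subordinate to $U_0,V_1,\dots,V_A$, and $\chi=0$ on $U_0$, $\chi=1$ off $U_0^+$.
\end{enumerate}
Then $-\Delta U_x\ge L$ at critical points for $x\notin U_0^+$. For $x\in U_0^+$, where $\scal_{g_x}>0$ at critical points already, $2(-\Delta U_x)-(n-2)|\nabla U_x|^2\ge0$ at critical points. This gives positivity on all of $C$, and $\Omega$ follows by openness, with no single-chart or disjointness condition needed.
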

\noindent The proof of the proposition above consists of an appropriate family of conformal changes of $g_x$ supported away from $X_0$. 
It is given below in \cref{sec:local-scalar}.

\medskip

On $M\setminus U$, the one-form $\mathrm{d}f$ is nowhere zero. If $M\setminus U$ is nonempty, define
\[
 \mu\coloneqq \min_{M\setminus U}|\nabla_{g_0}f|_{g_0}^2>0;
\]
if $M\setminus U=\varnothing$, choose any $\mu>0$. Set
\[
 B\coloneqq \max\left\{0,-\min_M \scal_{g_0}\right\}.
\]
Choose $\Lambda_0\geq1$ so large that
\begin{equation}\label{eq:Lambda-star-choice}
 c\Lambda_0^2\mu>B.
\end{equation}
Then
\[
 \scal_{g_0}+c|\nabla_{g_0}(\Lambda_0 f)|_{g_0}^2>0
\]
on $M\setminus U$ by \eqref{eq:Lambda-star-choice}, and because of $\scal_{g_0}>0$ on $U$.
Thus,
\begin{equation}\label{eq:basepoint}
 (g_0,\Lambda_0 f)\in\Q_c(M).
\end{equation}
In particular, the space $\Q_c(M)$ is nonempty and, from now on we may assume $k>0$ and hence $\mathbb S^{k-1}\not=\varnothing$.
The pair $(g_0,\Lambda_0 f)$ is the base point to which all compact families will be contracted to.
\smallskip

By the compactness of $\mathbb S^{k-1}$ and $M$, there is a positive margin
\begin{equation}\label{eq:initial-margin}
 \varepsilon_0\coloneqq \min_{(\xi,m)\in\mathbb S^{k-1}\times M}\bigl(\scal_{g_\xi}(m)+c|\nabla_{g_\xi}\varphi_\xi|_{g_\xi}^2(m)\bigr)>0.
\end{equation}
We consider the straight line homotopy
\[
H_{\xi,t}\coloneqq (1-t)\varphi_\xi + tf.
\]
We have the following proposition, which states that this homotopy can be smoothly approximated by functions which have finitely many critical points.
Its proof, which we present in \cref{sec:parametric-input}, relies on a foliated genericity theorem of Bertelson, see \cref{thm:bertelson-isolated}.

\begin{proposition}\label{cor:finite-critical-path}
Let $X$ be a closed manifold.
Let
$\varphi_x$, $x\in X$, be a smooth family of functions in $C^\infty(M)$, and let $f\in C^\infty(M)$ be fixed.
For every neighborhood of the linear interpolation
\[
 H_{x,t}\coloneqq (1-t)\varphi_x+t f,
 \qquad (x,t)\in X\times[0,1],
\]
there exists a smooth family
\[
 \psi_{x,t}\in C^\infty(M),
 \qquad (x,t)\in X\times[0,1],
\]
inside that neighborhood such that every function $\psi_{x,t}$ has finitely many critical points.
\end{proposition}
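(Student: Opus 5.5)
The plan is to deduce the statement from Bertelson's foliated genericity theorem (\cref{thm:bertelson-isolated}), applied to the trivial fibration $P\coloneqq X\times[0,1]\times M\to X\times[0,1]$. I regard a smooth family $(\psi_{x,t})$ as a single smooth function $\Psi\colon P\to\R$. The relevant object is its fiberwise differential $\mathrm d_M\Psi$, which is a section of the bundle $T^*M$ pulled back to $P$, i.e.\ of the foliation cotangent bundle of the foliation by the fibers $\{(x,t)\}\times M$. The point is to choose a neighborhood of $H$, in the $C^\infty$ topology on $P$, so that every $\Psi$ in it stays close to $H$. Bertelson's result should then give, in any such neighborhood, a \emph{foliated generic} function. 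For such a function the restriction to each leaf has only isolated critical points, and, since $M$ is compact, therefore only finitely many.

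The key steps are as follows.
\begin{enumerate}
\item[(1)] Since $[0,1]$ has boundary, I first extend $H$ to $X\times(-\delta,1+\delta)\times M$ using the same linear formula. This puts us in the setting of a foliation on an open manifold, or equivalently on a closed manifold once $(-\delta,1+\delta)$ is replaced by a circle after a cutoff.
\item[(2)] Next I check that a neighborhood of $H$ in $C^\infty(X\times[0,1]\times M)$ is the same as a neighborhood of the family $(H_{x,t})$ in the topology relevant for smooth families. This holds because the parameter space $X\times[0,1]$ is compact, so both are the Whitney $C^\infty$ topology.
\item[(3)] I then invoke \cref{thm:bertelson-isolated}. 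The underlying mechanism is that the fiberwise jets of $\Psi$ along critical points must avoid a stratified subset of the jet bundle. This subset consists of the jets that are not finitely determined, and it has codimension growing without bound in the jet order, by Mather and Whitney. Thom transversality in the total space then shows that, for a residual set of $\Psi$, no point of $P$ has an infinitely degenerate fiberwise jet, because $\dim P$ is finite. Finitely determined critical points are isolated.
\item[(4)] Finally I restrict the resulting $\Psi$ back to $X\times[0,1]\times M$ and set $\psi_{x,t}\coloneqq\Psi(x,t,\cdot)$. Each $\psi_{x,t}$ has isolated critical points on the compact manifold $M$, hence finitely many.
\end{enumerate}

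I expect the main obstacle to be step (3), that is, verifying that Bertelson's theorem as stated applies verbatim to a product foliation on $X\times I\times M$ with a boundary or open factor. It also has to deliver density in the Whitney topology, not just genericity in some weaker sense. If the boundary of $[0,1]$ causes trouble, I would first try the extension to the open interval from step (1) together with a relatively compact restriction.
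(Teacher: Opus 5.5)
Your proposal is correct and is essentially the paper's argument: you extend the family to a closed manifold containing $X\times[0,1]\times M$, apply Bertelson's theorem to the product foliation with leaves $\{(x,t)\}\times M$, use compactness of $M$ to turn isolated critical points into finitely many, and restrict back (you replace $[0,1]$ by a circle after extending the linear formula and cutting off, while the paper passes to closed doubles). Your step (3) only unpacks the ingredients inside Bertelson's proof (Thom transversality, Mather's finite determinacy, Whitney's results), which the paper takes as given.
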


Now, let $\psi_{\xi,t}$ be a smooth approximation of $H_{\xi,t}$ such that $\psi_{\xi,t}$ only has finitely many critical points for every pair $(\xi,t)\in\mathbb S^{k-1}\times[0,1]$.
Since this approximation may be chosen to be contained in any neighborhood of $H_{\xi,t}$, we may assume that $\psi_{\xi,t}$ satisfies the following three conditions:
\begin{enumerate}
  \item For all $\xi \in \mathbb S^{k-1}$, the straight-line segment $\theta^{0}_{\xi ,s}\coloneqq (1-s)\varphi_\xi+s\psi_{\xi,0}$ for $0\leq s\leq1$ satisfies
  \begin{equation}\label{eq:initial-short-positive}
    \scal_{g_\xi}+c|\nabla_{g_\xi}\theta^{0}_{\xi,s}|_{g_\xi}^2>0\qquad\text{for all }(\xi,s)\in \mathbb S^{k-1}\times[0,1].
  \end{equation}
  This can be achieved as we are dealing with the strict inequality, which gives an open condition, and compactness of $\mathbb S^{k-1}\times M$.
  Moreover, we require that at every critical point of $\psi_{\xi,0}$,
  \begin{equation}\label{eq:initial-critical-R-positive}
  \scal_{g_\xi}>0.
  \end{equation}
  Indeed, if $\mathrm{d}\psi_{\xi,0}=0$ and $\psi_{\xi,0}$ is $C^1$-close enough to $\varphi_\xi$, then
  $|\nabla_{g_\xi}\varphi_\xi|_{g_\xi}^2<\varepsilon_0/(2c)$, so \eqref{eq:initial-margin} gives
  $\scal_{g_\xi}>\varepsilon_0/2$.
  \medskip
  \item Second, the terminal endpoint $\psi_{\xi,1}$ is chosen so that $\Crit(\psi_{\xi,1})\subset U$ for every $\xi\in \mathbb S^{k-1}$, where $U$ is the open neighborhood of $\Crit(f)$ fixed above.
  This is possible because $|\nabla_{g_0}f|_{g_0}$ has a positive minimum on the compact set $M\setminus U$.
  Since $\scal_{g_0}>0$ on $U$, the fact that $\Crit(\psi_{\xi,1})\subset U$ implies
  \begin{equation}\label{eq:terminal-critical-R-positive}
    \scal_{g_0}>0\quad\text{at every critical point of }\psi_{\xi,1}.
  \end{equation}
  \item Third, for the straight line homotopy from $\psi_{\xi,1}$ back to $f$ given by $\theta^{1}_{\xi,s}\coloneqq (1-s)\psi_{\xi,1}+s f$, we require that, on $M\setminus U$,
  \begin{equation}\label{eq:terminal-gradient-bound}
    4|\nabla_{g_0}\theta^{1}_{\xi,s}|_{g_0}^2\geq \mu \qquad\text{for all }(\xi,s)\in \mathbb S^{k-1}\times[0,1].
  \end{equation}
  This follows if $\mathrm{d}\psi_{\xi,1}$ is uniformly sufficiently close to $\mathrm{d}f$. 
\end{enumerate}

Next, we define the convex interpolation of metrics
\begin{equation}\label{eq:metric-interpolation}
 \overline g_{\xi,t}\coloneqq (1-t)g_\xi+t g_0.
\end{equation}
This is a smooth family of Riemannian metrics. 
Let
\[
 X=\mathbb S^{k-1}\times[0,1], \qquad X_0=\mathbb S^{k-1}\times\{0,1\}.
\]
By \eqref{eq:initial-critical-R-positive} and \eqref{eq:terminal-critical-R-positive}, the family
$\overline g_{\xi,t}$ has positive scalar curvature at the critical points of $\psi_{\xi,t}$ over $X_0$.
Therefore, \cref{lem:local-improvement} applies. 
It gives a smooth family of metrics $G_{\xi,t}$, equal to $\overline g_{\xi,t}$ near $t=0$ and near $t=1$, and an open neighborhood
\[
 \Omega\subset \mathbb S^{k-1}\times[0,1]\times M
\]
of $C=\{(\xi,t,m)\in \mathbb S^{k-1}\times[0,1]\times M \mid \mathrm{d}\psi_{\xi,t}(m)=0\}$ such that
\begin{equation}\label{eq:G-positive-on-Omega}
 \scal_{G_{\xi,t}}>0\quad\text{on }\Omega_{\xi,t},
\end{equation}
where we recall that $\Omega_{\xi,t}= \{m\in M \mid (\xi,t,m)\in\Omega\}$. On the compact complement of $\Omega$, the one-form $\mathrm{d}\psi_{\xi,t}$ is nowhere zero. 
Hence,
\begin{equation}\label{eq:middle-gradient-lower}
 \mu_0\coloneqq \min_{(\xi,t,m)\in (\mathbb S^{k-1}\times[0,1]\times M)\setminus\Omega}|\nabla_{G_{\xi,t}}\psi_{\xi,t}|_{G_{\xi,t}}^2(m)>0,
\end{equation}
with the convention that if the complement is empty, any positive $\mu_0$ may be chosen. 
Also put
\begin{equation}\label{eq:middle-scalar-lower}
 A_0\coloneqq \max\left\{0,-\min_{(\xi,t,m)\in\mathbb S^{k-1}\times[0,1]\times M}\scal_{G_{\xi,t}}(m)\right\}.
\end{equation}
Choose a constant $\Lambda\geq \Lambda_0\geq 1$ so large that
\begin{equation}\label{eq:Lambda-choice}
 c\Lambda^2\mu_0>A_0,
 \qquad
 c\Lambda^2\mu>4B.
\end{equation}
Then we claim that
\begin{equation}\label{eq:middle-in-Q}
 (G_{\xi,t},\Lambda\psi_{\xi,t})\in\Q_c(M)
 \qquad\text{for every }(\xi,t)\in \mathbb S^{k-1}\times[0,1].
\end{equation}
Indeed, on $\Omega_{\xi,t}$ this follows from \eqref{eq:G-positive-on-Omega}; outside $\Omega_{\xi,t}$ it follows
from \eqref{eq:middle-gradient-lower}, \eqref{eq:middle-scalar-lower}, and \eqref{eq:Lambda-choice}.
We now concatenate five homotopies in $\Q_c(M)$.
\begin{enumerate}[label=\textup{(\arabic*)}]
\item The initial homotopy
\[
 (g_\xi,\varphi_\xi)\leadsto(g_\xi,\psi_{\xi,0})
\]
is given by $\theta^{0}_{\xi,s}\coloneqq (1-s)\varphi_\xi+s\psi_{\xi,0}$; it lies in $\Q_c(M)$ by
\eqref{eq:initial-short-positive}.

\item Scale the initial endpoint:
\[
 (g_\xi,\psi_{\xi,0})\leadsto(g_\xi,\Lambda\psi_{\xi,0}).
\]
For $a\in[1,\Lambda]$, we have
\[
 \scal_{g_\xi}+c|\nabla_{g_\xi}(a\psi_{\xi,0})|_{g_\xi}^2
 =\bigl(\scal_{g_\xi}+c|\nabla_{g_\xi}\psi_{\xi,0}|_{g_\xi}^2\bigr)
  +c(a^2-1)|\nabla_{g_\xi}\psi_{\xi,0}|_{g_\xi}^2>0,
\]
because $(g_\xi,\psi_{\xi,0})\in\Q_c(M)$ by step (1).

\item Use the homotopy \eqref{eq:middle-in-Q}:
\[
 (g_\xi,\Lambda\psi_{\xi,0})\leadsto(g_0,\Lambda\psi_{\xi,1}).
\]
Here we used that $G_{\xi,t}=\overline g_{\xi,t} = (1-t)g_\xi+t g_0$ near the endpoints, so $G_{\xi,0}=g_\xi$ and $G_{\xi,1}=g_0$.

\item Move the terminal function from $\psi_{\xi,1}$ to $f$ while keeping the metric $g_0$ and the scale $\Lambda$:
\[
 (g_0,\Lambda\psi_{\xi,1})\leadsto(g_0,\Lambda f).
\]
The path is $(g_0,\Lambda\theta^{1}_{\xi,s})$. 
On $U$, positivity follows from $\scal_{g_0}>0$.
On $M\setminus U$, it follows from \eqref{eq:terminal-gradient-bound} and \eqref{eq:Lambda-choice}:
\[
 \scal_{g_0}+c\Lambda^2|\nabla_{g_0}\theta^{1}_{\xi,s}|_{g_0}^2
 \geq -B+c\Lambda^2\frac{\mu}{4}>0,
\]
where $B$ and $\mu$ are defined below \cref{lem:local-improvement}.

\item Finally, scale $\Lambda f$ down to $\Lambda_0f$:
\[
 (g_0,\Lambda f)\leadsto(g_0,\Lambda_0f).
\]
For every $\lambda\in[\Lambda_0,\Lambda]$, the pair $(g_0,\lambda f)$ lies in $\Q_c(M)$: on $U$ we
have $\scal_{g_0}>0$, and on $M\setminus U$, by \eqref{eq:Lambda-star-choice},
\[
 \scal_{g_0}+c\lambda^2|\nabla_{g_0}f|_{g_0}^2
 \geq -B+c\Lambda_0^2\mu>0.
\]
\end{enumerate}
The concatenation of the above five homotopies exhibits a $\mathcal{Q}_c(M)$-valued homotopy connecting the family $(g_\xi,\varphi_\xi)$, $\xi\in\mathbb S^{k-1}$, to the constant family $(g_0,\Lambda_0f)$. Hence, this yields the sought extension $\mathbb D^k\to \mathcal{Q}_c(M)$. 
This proves \cref{thm:main}, assuming \cref{lem:local-improvement} and \cref{cor:finite-critical-path}.

\subsubsection{A local scalar-curvature deformation near finite critical sets}\label{sec:local-scalar}

We next prove \cref{lem:local-improvement}.
The key idea is the following: by a conformal change $e^{2u}g$, with $u$ locally equal to a negative quadratic polynomial, one can make the term $-\Delta_g u$ as large as needed near a prescribed finite set while keeping $|\nabla_g u|_g^2$ small relative to $-\Delta_g u$.
This is achieved by working in sufficiently small coordinate balls.
The mechanism of the proof of \cref{lem:local-improvement} is reminiscent of the one in \cite[Lemma 2.5]{BaerHankeLocalFlexibility}. 
We recall that $A\Subset B$ means that the closure $\overline A$ is compact and contained in $B$.

\begin{lemma}\label{lem:local-bump}
Let $K$ be a compact Hausdorff space, let $g_\kappa$,  $\kappa\in K$, be a continuous family of smooth metrics
on $M$, and let $z\in M$. Given constants $L>0$ and $\eta>0$, there are coordinate balls
\[
 B^-\Subset B^+\Subset M
\]
centered at $z$ and a function $u\in C_c^\infty(B^+)$ such that, for every $\kappa\in K$ and every
$x\in B^-$,
\begin{equation}\label{eq:bump-estimates}
 -\Delta_{g_k}u(x)\geq L,
 \qquad
 |\nabla_{g_\kappa}u(x)|_{g_\kappa}^2\leq \eta\,\bigl(-\Delta_{g_\kappa}u(x)\bigr).
\end{equation}
\end{lemma}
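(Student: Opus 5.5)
We work in a fixed coordinate chart $x=(x^1,\dots,x^n)$ centred at $z$, identified with a Euclidean ball $B_{r_0}(0)$, and choose $u$ to be a negative multiple of $|x|^2$ near $z$, cut off further out. Write $g_\kappa=(g_{\kappa,ij})$ in these coordinates. By compactness of $K$ and continuity of the family in the $C^\infty$-topology, there are constants $\Theta\ge1$ and $D\geq 0$ such that, for all $\kappa\in K$ and $|x|\le r_0/2$,
\[
\Theta^{-1}\delta\le g_\kappa\le\Theta\delta,\qquad |\partial g_\kappa|\le D .
\]
The only properties of $g_\kappa$ we use are these uniform bounds.

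Fix $r\in(0,r_0/4)$, to be chosen small, and put $B^-=B_r(0)$ and $B^+=B_{2r}(0)$. Let $\chi\in C_c^\infty(B_{2r})$ be a cutoff with $\chi\equiv1$ on $B_r$. Set
\[
u(x)=-\frac{A}{2}\,\chi(x)\,|x|^2 ,
\]
with $A>0$ a constant to be chosen. On $B^-$ we have $u=-\tfrac A2|x|^2$. Using
\[
\Delta_{g}u=g^{ij}\partial_i\partial_j u-g^{ij}\Gamma^k_{ij}\partial_k u ,
\]
we get on $B^-$
\[
-\Delta_{g_\kappa}u=A\,\mathrm{tr}_\delta(g_\kappa^{-1})-A\,g_\kappa^{ij}\Gamma^k_{ij}x_k .
\]
The first term is at least $nA/\Theta$. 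The second is bounded in absolute value by $C A r$, with $C$ depending only on $\Theta$, $D$ and $n$. Also
\[
|\nabla_{g_\kappa}u|^2=A^2\,g_\kappa^{ij}x_ix_j\le \Theta A^2 r^2 .
\]

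First choose $r$ so small that $CAr\le nA/(2\Theta)$. This condition is independent of $A$, since both sides scale linearly in $A$, so it reads $Cr\le n/(2\Theta)$. Then on $B^-$,
\[
-\Delta_{g_\kappa}u\ge \frac{nA}{2\Theta}.
\]
The second estimate in \eqref{eq:bump-estimates} now requires
\[
\Theta A^2r^2\le \eta\,\frac{nA}{2\Theta},\qquad\text{i.e.}\qquad A r^2\le \frac{\eta n}{2\Theta^2}.
\]
The first estimate requires $nA/(2\Theta)\ge L$, which forces $A$ to be large.

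This is the main tension: a larger $A$ is needed for $L$, but a larger $A$ makes the gradient condition harder. It is resolved by taking $r$ small after fixing $A$. Concretely, set
\[
A=\frac{2\Theta L}{n},
\]
then shrink $r$ further until $Ar^2\le \eta n/(2\Theta^2)$. This is possible because the gradient-to-Laplacian ratio scales like $Ar^2$. Shrinking $r$ preserves the earlier condition $Cr\le n/(2\Theta)$.

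The cutoff $\chi$ only affects the annulus $B^+\setminus B^-$, where no estimate is required. Hence $u\in C_c^\infty(B^+)$ satisfies \eqref{eq:bump-estimates} for all $\kappa\in K$ and all $x\in B^-$, and the coordinate balls $B^-\Subset B^+$ are centred at $z$ as required.
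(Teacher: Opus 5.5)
Your proof is correct and follows essentially the same route as the paper: a quadratic bump $u=-A|y|^2$ (up to your harmless factor $\tfrac12$) in a chart centered at $z$, and uniform $C^1$ bounds on $g_\kappa$ coming from compactness of $K$; then $A$ is chosen large to get $-\Delta_{g_\kappa}u\ge L$, after which $r$ is shrunk so that the gradient-to-Laplacian ratio, of order $Ar^2$, is at most $\eta$, and finally a cutoff supported in $B^+$ is applied. Your explicit treatment of the Christoffel term makes the order of quantifiers precise: $r$ is first fixed below a threshold independent of $A$, then $A$ is chosen, then $r$ is shrunk again. The paper compresses this into the phrase ``constants $C_1,C_2$ independent of $\kappa$ and $r$''.
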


\begin{proof}
We work in a coordinate chart centered at $z$. On a small coordinate ball, take
\[
 u(y)=-A|y|^2,
\]
for some $A>0$ to be determined below.
For the Euclidean metric, $-\Delta u=2nA$ and $|\nabla u|^2=4A^2|y|^2$.
Since the parameter space $K$ is compact and the coefficients of $g_\kappa$ and their first derivatives vary continuously, after choosing the
coordinate radius $r$ sufficiently small (and $B^+$ being the coordinate ball centered at $z$ with radius $r$) the same estimates hold uniformly up to fixed multiplicative
constants:
\[
 -\Delta_{g_\kappa}u\geq C_1A,
 \qquad
 |\nabla_{g_\kappa}u|_{g_\kappa}^2\leq C_2A^2r^2
\]
on the ball $B^-$ of coordinate radius $r/2$ centered at $z$, with constants $C_1,C_2>0$ independent of $\kappa$ and $r\leq 1$.
Choose $A$ so large that $C_1A\geq L$, and then choose $r$ so small that $C_2A^2r^2\leq \eta C_1A$. Finally, multiply $u$ by a cutoff which is identically $1$ on $B^-$ and supported in $B^+$. 
The cutoff does not change the estimates on the inner ball.
\end{proof}

\begin{proof}[Proof of \cref{lem:local-improvement}]
The set $C$ is closed in the compact space $X\times M$, hence compact. By
\eqref{eq:positive-on-relative-critical-set} and continuity, there exists an open neighborhood $W$ of
\[
 C_0\coloneqq C\cap(X_0\times M)
\]
in $X\times M$ such that $\scal_{g_x}>0$ on $W$. Since $C$ is compact and $X_0$ is closed, we can find an open neighborhood $U_0^+$ of $X_0$ in $X$ such that
\begin{equation}\label{eq:positive-near-X0}
 C\cap(U_0^+\times M)\subset W.
\end{equation}
Thus, for parameters $x$ in $U_0^+$, the scalar curvature $\scal_{g_x}$ is positive at all critical points of $\psi_x$.
Let
\begin{equation}\label{eq:def-of-rho-0}
  \rho_0\coloneqq \min_{X\times M} \scal_{g_x}(m).
\end{equation}
We first choose a slightly smaller neighborhood on which the final deformation will vanish. Choose an
open neighborhood \(U_0\) of \(X_0\) such that
\[
 X_0\subset U_0 \Subset U_0^+ .
\]
We shall use \cref{lem:local-bump} with the following addition: if \(O\) is a
coordinate neighborhood of the point \(z\), then the balls produced by the lemma may be required to satisfy
\[
 B^-\Subset B^+\Subset O.
\]
Choose \(\eta>0\) so small that
\begin{equation}\label{eq:eta-choice}
 a\coloneqq 2-(n-2)\eta>0.
\end{equation}
Then choose \(L>0\) so large that
\begin{equation}\label{eq:L-choice}
 \rho_0+(n-1)aL>0,
\end{equation}
for $\rho_0$ as defined in \eqref{eq:def-of-rho-0}.
We now cover the relevant parameter space \(X\setminus U_0\). Fix \(x_*\in X\setminus {U_0}\), and
choose an open neighborhood \(Q_*\subset X\) of \(x_*\). Write
\[
 \Crit(\psi_{x_*})=\{z_{*,1},\ldots,z_{*,N_*}\},
\]
where \(N_*\) is allowed to be zero. If \(N_*=0\), then \(\mathrm d\psi_{x_*}\) is nonzero on all of \(M\). Hence, after taking an open
neighborhood \(V_*\Subset Q_*\) of \(x_*\), we have $
 \Crit(\psi_x)=\varnothing$
 for all $x\in V_*$.
In this case no bump functions are chosen.

Assume now that \(N_*>0\). Choose pairwise disjoint coordinate neighborhoods
\[
 O_{*,j}\ni z_{*,j},
 \qquad j=1,\ldots,N_* .
\]
Applying \cref{lem:local-bump}, in the localized form just described, to the compact parameter
space \(\overline{Q_*}\), to the family \(g_x\), \(x\in \overline{Q_*}\), and to the point \(z_{*,j}\), we obtain
coordinate balls
\[
 B_{*,j}^-\Subset B_{*,j}^+\Subset O_{*,j}
\]
and functions
\[
 u_{*,j}\in C_c^\infty(B_{*,j}^+)
\]
such that, for every \(x\in\overline{Q_*}\) and every \(m\in B_{*,j}^-\),
\[
 -\Delta_{g_x}u_{*,j}(m)\geq L,
 \qquad
 |\nabla_{g_x}u_{*,j}(m)|_{g_x}^2
 \leq
 \eta\bigl(-\Delta_{g_x}u_{*,j}(m)\bigr).
\]
Since the neighborhoods \(O_{*,j}\) are pairwise disjoint, the outer coordinate balls \(B_{*,j}^+\) are pairwise
disjoint as well.

Now, after the inner balls \(B_{*,j}^-\) have been fixed, use continuity of the family
\(\mathrm d\psi_x\) to obtain an open subset \(V_*\Subset Q_*\) so that
\[
 \Crit(\psi_x)\subset \bigcup_{j=1}^{N_*}B_{*,j}^-
 \qquad
 \text{for all }x\in V_*.
\]

Choose finitely many of these neighborhoods \(V_\alpha\), \(\alpha=1,\ldots,A\), covering the compact
set \(X\setminus U_0\). 
Thus, for every \(\alpha\), we have
\begin{equation}\label{eq:crit-stays-inner}
 \Crit(\psi_x)\subset\bigcup_{j=1}^{N_\alpha}B_{\alpha,j}^-
 \qquad
 \text{for all }x\in V_\alpha,
\end{equation}
with the convention that the right-hand side is empty if \(N_\alpha=0\). Moreover, for every
\(j=1,\ldots,N_\alpha\), every \(x\in\overline{Q_\alpha}\), and every \(m\in B_{\alpha,j}^-\),
\begin{equation}\label{eq:local-u-estimates}
 -\Delta_{g_x}u_{\alpha,j}(m)\geq L,
 \qquad
 |\nabla_{g_x}u_{\alpha,j}(m)|_{g_x}^2
 \leq
 \eta\bigl(-\Delta_{g_x}u_{\alpha,j}(m)\bigr).
\end{equation}

Put
\[
 u_\alpha\coloneqq \sum_{j=1}^{N_\alpha}u_{\alpha,j},
\]
with the convention that \(u_\alpha=0\) if \(N_\alpha=0\). Because the supports of the
\(u_{\alpha,j}\) are pairwise disjoint for fixed \(\alpha\), \eqref{eq:crit-stays-inner} and
\eqref{eq:local-u-estimates} imply that, for every \(x\in V_\alpha\) and every
\(m\in\Crit(\psi_x)\),
\begin{equation}\label{eq:u-alpha-estimates}
 -\Delta_{g_x}u_\alpha(m)\geq L,
 \qquad
 |\nabla_{g_x}u_\alpha(m)|_{g_x}^2
 \leq
 \eta\bigl(-\Delta_{g_x}u_\alpha(m)\bigr).
\end{equation}
Now, choose a smooth function \(\chi:X\to[0,1]\) such that
\[
 \chi=0 \quad\text{on } U_0,
 \qquad
 \chi=1 \quad\text{on } X\setminus U_0^+.
\]
Choose a smooth partition of unity
\[
 \{\mu_-,\mu_1,\ldots,\mu_A\}
\]
subordinate to the open cover
\[
 U_0,\,V_1,\ldots,V_A
\]
of \(X\), and set
\[
 \lambda_\alpha\coloneqq \chi\mu_\alpha,
 \qquad \alpha=1,\ldots,A.
\]
Then
\[
 0\leq\sum_\alpha\lambda_\alpha=\chi\leq1,
\]
all \(\lambda_\alpha\) vanish on \(U_0\), and
\[
 \sum_\alpha\lambda_\alpha=1
 \qquad
 \text{on } X\setminus U_0^+ .
\]
Define
\[
 U(x,m)\coloneqq \sum_{\alpha=1}^A\lambda_\alpha(x)u_\alpha(m),
 \qquad
 G_x\coloneqq e^{2U_x}g_x.
\]
Then \(G_x=g_x\) for all \(x\in U_0\).

It remains to verify the scalar curvature estimate of $G_x$, so let \((x,m)\in C\). For every index \(\alpha\) with \(\lambda_\alpha(x)>0\), we have
\(x\in V_\alpha\), so \eqref{eq:u-alpha-estimates} applies. Hence,
\[
 -\Delta_{g_x}U_x(m)
 =
 \sum_\alpha\lambda_\alpha(x)\bigl(-\Delta_{g_x}u_\alpha(m)\bigr)
 \geq 0.
\]
Moreover, since \(0\leq\sum_\alpha\lambda_\alpha(x)\leq1\), the convexity of the squared norm gives
\begin{align}\label{eq:sum-gradient}
 |\nabla_{g_x}U_x(m)|_{g_x}^2
 &=
 \left|\sum_\alpha\lambda_\alpha(x)\nabla_{g_x}u_\alpha(m)\right|_{g_x}^2  \\
 &\leq
 \sum_\alpha\lambda_\alpha(x)|\nabla_{g_x}u_\alpha(m)|_{g_x}^2  \notag\\
 &\leq
 \eta\sum_\alpha\lambda_\alpha(x)\bigl(-\Delta_{g_x}u_\alpha(m)\bigr)  \notag\\
 &=
 \eta\bigl(-\Delta_{g_x}U_x(m)\bigr). \notag
\end{align}

If \(x\notin U_0^+\), then \(\sum_\alpha\lambda_\alpha(x)=1\). Therefore, using
\eqref{eq:u-alpha-estimates},
\begin{equation}\label{eq:sum-laplace}
 -\Delta_{g_x}U_x(m)
 =
 \sum_\alpha\lambda_\alpha(x)\bigl(-\Delta_{g_x}u_\alpha(m)\bigr)
 \geq L.
\end{equation}
Using the formula for the conformal change of the scalar curvature \eqref{eq:conformal-scalar}, \eqref{eq:sum-gradient}, and
\eqref{eq:sum-laplace}, we get
\begin{align*}
 e^{2U_x(m)}\scal_{G_x}(m)
 &\geq
 \rho_0
 +2(n-1)\bigl(-\Delta_{g_x}U_x(m)\bigr)
 -(n-1)(n-2)|\nabla_{g_x}U_x(m)|_{g_x}^2  \\
 &\geq
 \rho_0
 +(n-1)\bigl(2-(n-2)\eta\bigr)
 \bigl(-\Delta_{g_x}U_x(m)\bigr) \\
 &=
 \rho_0+(n-1)a\bigl(-\Delta_{g_x}U_x(m)\bigr) \\
 &\geq
 \rho_0+(n-1)aL>0.
\end{align*}

It remains to consider the case \(x\in U_0^+\). By \eqref{eq:positive-near-X0}, the original scalar
curvature satisfies \(\scal_{g_x}(m)>0\) at all points \((x,m)\in C\) with \(x\in U_0^+\). Also, by
\eqref{eq:sum-gradient},
\[
 2\bigl(-\Delta_{g_x}U_x(m)\bigr)
 -(n-2)|\nabla_{g_x}U_x(m)|_{g_x}^2
 \geq
 \bigl(2-(n-2)\eta\bigr)\bigl(-\Delta_{g_x}U_x(m)\bigr)
 =
 a\bigl(-\Delta_{g_x}U_x(m)\bigr)
 \geq 0.
\]
Thus, the conformal deformation preserves positivity of the scalar curvature at
$(x,m)\in C$ with $x\in U_0^+$. Hence, $\scal_{G_x}(m)>0$ for every $(x,m)\in C$ with $x\in U_0^+$ as well.

Therefore, $\scal_{G_x}>0$ at every point of $C$. By compactness of $C$ and continuity, there is an
open neighborhood $\Omega\subset X\times M$ of $C$ such that $\scal_{G_x}>0$ on
$\Omega_x$ for every $x\in X$, as desired. 
\end{proof}

\subsubsection{Bertelson's result}\label{sec:parametric-input}

Let us now prove \cref{cor:finite-critical-path}, which is based on a theorem of Bertelson.
Recall that a set $A\subset B$ is called \emph{residual} if it is the countable intersection of open and dense subsets.
If $X$ is a foliated manifold and $L_x$ is the leaf through $x\in X$, then $x$ is a \emph{leafwise critical point} of $h\in C^\infty(X)$ if it is a critical point of the restriction $h|_{L_x}$.

\begin{proposition}[{\cite[Proposition~3.21]{Bertelson2002}}]
\label{thm:bertelson-isolated}
Let $X$ be a closed manifold  equipped with a smooth foliation. There is a residual and hence dense subset
of $C^\infty(X)$, with respect to the $C^\infty$-topology, consisting of
functions $h$ such that, for every leaf $L$ of the foliation, all the critical
points of $h|_L$ are isolated in $L$.
\end{proposition}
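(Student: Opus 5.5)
The plan is to reduce the statement to a jet-transversality argument for \emph{leafwise} jets, combined with a finite-determinacy criterion that forces critical points to be isolated. Let $n=\dim X$, let $p$ be the leaf dimension and $q=n-p$ the codimension of the foliation. In a foliated chart $(y,z)\in\R^p\times\R^q$, with leaves $\{z=\mathrm{const}\}$, the leafwise $r$-jet of $h$ at a point records only the $y$-derivatives of order $\le r$. Globally these form a fibre bundle $J^r_{\mathcal F}\to X$ whose fibre is the space $P^r_p$ of polynomials of degree $\le r$ in $p$ variables. The first step is to identify a bad set $\Sigma_r\subset P^r_p$, invariant under the structure group, such that every germ at $0$ whose $r$-jet lies outside $\Sigma_r$ has an isolated critical point or no critical point at $0$.

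For $\Sigma_r$ I would take the set of jets $j$ for which the ideal generated by the partial derivatives $\partial_{y_i}j$ in the truncated algebra $\R[y]/\mathfrak m^{r+1}$ does not contain $\mathfrak m^{k}$, where $k\ll r$ is suitably chosen. The jets with $j$ noncritical at $0$ are trivially excluded. I would then prove three properties of $\Sigma_r$.
\begin{itemize}
\item It is real algebraic, indeed defined by rank conditions on linear maps with coefficients polynomial in $j$. Whitney's theorem then gives a finite stratification into submanifolds.
\item If the jet of $h|_L$ lies outside $\Sigma_r$, then the Jacobian ideal of the germ contains $\mathfrak m^k$ modulo $\mathfrak m^{r+1}$. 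By Nakayama and Mather's finite determinacy theorem, this gives $\mathfrak m^k\subset J(h)$ for the actual germ, so $\mathfrak m^k\subset J(h)$ holds at the germ level. Hence the gradient has an isolated zero, because the common zero set of the generators of an ideal containing $\mathfrak m^k$ is $\{0\}$ locally.
\item The codimension of $\Sigma_r$ in $P^r_p$ tends to infinity as $r\to\infty$, for a suitable choice of $k=k(r)$. This is the classical estimate that non-finitely-determined jets have infinite codimension.
\end{itemize}
Fixing $r$ with $\codim\Sigma_r>n$ reduces the problem to showing that, for a residual set of $h$, the leafwise jet section $j^r_{\mathcal F}h\colon X\to J^r_{\mathcal F}$ misses $\Sigma_r$ (more precisely, its bundle version).

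The second step is a foliated version of Thom's jet transversality theorem. I would cover $X$ by countably many compact sets $K_i$, each contained in a foliated chart. For each $i$ I would show that the set of $h$ with $j^r_{\mathcal F}h(K_i)\cap\Sigma_r=\varnothing$ is open and dense; intersecting over $i$ then gives the residual set. Openness is immediate from compactness and the closedness of $\Sigma_r$. For density I would perturb $h$ to $h+\beta(y,z)\sum_{|\alpha|\le r}c_\alpha y^\alpha$, with $\beta$ a cutoff equal to $1$ near $K_i$ and the constants $c=(c_\alpha)$ small. The map
\[
(x,c)\longmapsto j^r_{\mathcal F}(h+\beta\, c\cdot y^\alpha)(x)
\]
is a submersion onto the fibre directions over $K_i$. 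By the parametric transversality theorem, applied stratum by stratum to the Whitney stratification of $\Sigma_r$, almost every $c$ gives a section transverse to every stratum. Since each stratum has codimension $>n=\dim X$, transversality means the section avoids $\Sigma_r$ entirely. The third property of $\Sigma_r$ and the dimension count then combine to give the proposition.

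I expect the main obstacle to be the codimension estimate, namely that $\codim\Sigma_r\to\infty$. My first approach would be to stratify by the dimension of $\R[y]/(J(j)+\mathfrak m^{r+1})$. I would then show that requiring this quotient to exceed the bound for finite determinacy at order $k$ imposes at least on the order of $k$ independent polynomial conditions, following Mather's and Tougeron's treatment of the subset $W^r$ of infinite-codimension jets. A secondary technical point is to check that the fibrewise definition of $\Sigma_r$ is independent of the foliated chart, since leaf-preserving changes of coordinates act on leafwise jets by polynomial automorphisms preserving the Jacobian-ideal condition.
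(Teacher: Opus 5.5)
Your proposal follows essentially the route the paper attributes to Bertelson. You work in the leafwise jet bundle with a structure-group-invariant bad set of jets defined through the Jacobian ideal, whose codimension exceeds $\dim X$, stratify it finitely, and then run Thom-type transversality with local polynomial perturbations. Like the paper, you leave the substantive codimension estimate to the classical finite-determinacy literature. Note also that Nakayama's lemma alone already gives $\mathfrak m^k\subset J(f)$ from the jet condition; finite determinacy is not needed for that step.

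One step is wrong as stated, though it is easily repaired. Your $\Sigma_r$ is not real algebraic, and it is not closed. The condition ``the ideal contains $\mathfrak m^k$'' is a comparison of ranks, so it only defines a semialgebraic set. It genuinely fails to be closed, because adjacencies can increase the Loewy length of the Milnor algebra. For example,
\[
f_\varepsilon=x^3+y^4+\varepsilon x^2+2\sqrt{\varepsilon}\,xy^2=\varepsilon\bigl(x+y^2/\sqrt{\varepsilon}\bigr)^2+x^3
\]
has an $A_5$ point at $0$. Hence $\mathfrak m^4\not\subset J(f_\varepsilon)+\mathfrak m^{r}$, while its limit $x^3+y^4$ has $J=(x^2,y^3)\supset\mathfrak m^4$. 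Analogous examples, such as $x^3+y^{2s}$ as a limit of $A_{3s-1}$ points, exist for every large $k$.

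So your sentence ``openness is immediate from compactness and the closedness of $\Sigma_r$'' does not hold. To fix it, replace $\Sigma_r$ by its closure. This is a closed, invariant semialgebraic set of the same dimension, hence finitely stratifiable with the same codimension, and avoiding it is a stronger requirement. Alternatively, use the closed algebraic set $\{j:\dim_{\mathbb R}\mathbb R[y]/(J(j)+\mathfrak m^{r})\ge N\}$ with $N<r$. Outside it one still gets $\mathfrak m^{N-1}\subset J(f)$ by Nakayama's lemma.
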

We briefly recall the ingredients entering the
result. The
relevant argument occupies only \cite[pp.~386--391]{Bertelson2002}. Apart from
the basic definitions, its main external inputs are Thom's (jet) transversality theorem
\cite[Theorem~3.1]{Bertelson2002}, Mather's finite determinacy theorem
\cite[Theorem~3.15]{Bertelson2002}, and two classical theorems of Whitney on
real algebraic sets \cite[Theorems~3.17 and~3.18]{Bertelson2002}. Once these
classical results are taken for granted, the remaining argument is short and can be
checked directly. 

\begin{corollary}\label{cor:igusa-finite-critical}
  Let $B$ and $N$ be compact and smooth manifolds, possibly with boundary, and let $F:B\times N\to\mathbb R$ be smooth. 
  Then $F$ can be arbitrarily closely approximated in the $C^\infty$-topology by a smooth map $\widetilde F:B\times N\to\mathbb R$ such that every fiber $\widetilde F_b:N\to\mathbb R$ has finitely many critical points.
\end{corollary}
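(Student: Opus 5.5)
The approach is to reduce to \cref{thm:bertelson-isolated} by realizing $B\times N$ as (part of) a closed foliated manifold whose leaves are the fibers $\{b\}\times N$. Since $B$ and $N$ may have boundary, I first embed them into closed manifolds. Take the doubles: $\widehat B\coloneqq B\cup_{\partial B}B$ is closed and contains $B$ as a compact codimension-zero submanifold, and similarly $\widehat N\supset N$; if $\partial B=\varnothing$ take $\widehat B=B$. Then $X\coloneqq\widehat B\times\widehat N$ is a closed manifold with the product foliation whose leaves are $\{b\}\times\widehat N$. A point $(b,y)$ is a leafwise critical point of $h\in C^\infty(X)$ exactly when $y$ is a critical point of $h(b,\cdot)$ on $\widehat N$.

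Next, extend $F$ to $X$. By Seeley's extension theorem, or simply by a Whitney-type extension across the collar followed by a cutoff, there is a smooth $\widehat F\in C^\infty(X)$ with $\widehat F|_{B\times N}=F$. The restriction map $C^\infty(X)\to C^\infty(B\times N)$ is continuous for the $C^\infty$-topologies, since both are compact and the topology is given by $C^k$-seminorms. So any neighborhood $\mathcal V$ of $F$ pulls back to a neighborhood of $\widehat F$. By \cref{thm:bertelson-isolated} the good functions are dense, so there is $h\in C^\infty(X)$ in this neighborhood all of whose leafwise critical points are isolated in their leaves. Set $\widetilde F\coloneqq h|_{B\times N}\in\mathcal V$.

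It remains to check finiteness. For fixed $b\in B$, the critical set of $h(b,\cdot)$ on the closed manifold $\widehat N$ is closed, hence compact, and consists of isolated points, hence is finite. The critical points of $\widetilde F_b$ in the interior of $N$ form a subset of this finite set. For critical points on $\partial N$, I interpret ``critical point'' as $\mathrm d(\widetilde F_b)(y)=0$ on $T_yN$. This agrees with $\mathrm d h(b,\cdot)(y)=0$ on $T_y\widehat N$, since the tangent spaces coincide, so these points are also in the finite set. Hence every fiber $\widetilde F_b$ has finitely many critical points.

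The only delicate point is the boundary bookkeeping: ensuring the extension is smooth and that the approximation is taken in the right topology. Bertelson's theorem is stated for closed manifolds, which is exactly why the doubling is needed. One also has to use that doubling a manifold with boundary yields a smooth closed manifold containing the original as a smooth submanifold with boundary; this holds after choosing a collar. Everything else is formal.
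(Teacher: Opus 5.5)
Your proposal is correct and follows the same route as the paper. The paper also treats the closed case by applying Bertelson's theorem to the product foliation and using that an isolated critical set in a compact fiber is finite, and it handles boundaries by extending $F$ to the product of closed doubles and restricting. Your version just spells out the boundary bookkeeping (the extension, continuity of restriction, and critical points on $\partial N$) that the paper leaves implicit.
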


\begin{proof}
Let us first assume that $\partial B=\partial N=\varnothing$. Endow $B\times N$ with the product foliation whose leaves are the fibers $\{b\}\times N$, $b\in B$. By 
\cref{thm:bertelson-isolated}, choose $\widetilde F$ in a prescribed
$C^\infty$-neighborhood of $F$ such that the critical points of its restriction to
every leaf are isolated. These leafwise critical points are precisely the
critical points of the functions $\widetilde F_b:N\to\mathbb R$, so every critical point of
every $\widetilde F_b$ is isolated. For a fixed $b$, the critical set
\[
 \Crit(\widetilde F_b)=(\mathrm{d}\widetilde F_b)^{-1}(0)
\]
is a closed subset of the compact manifold $N$, hence compact. A compact set
whose points are all isolated is finite, thus concluding the proof.

If either $B$ or $N$ has boundary, extend $F$ to the product of suitable closed
doubles, apply the preceding argument there, and restrict the resulting
function to $B\times N$.
\end{proof}

The following is precisely the result we shall use.

\begin{proof}[Proof of \cref{cor:finite-critical-path}]
Apply \cref{cor:igusa-finite-critical} to $B=X\times [0,1]$, $N=M$, and $F((x,t),m)=H_{x,t}(m)$.
\end{proof}

\section{Proof of \texorpdfstring{\cref{main:existence-with-invariance-and-boundary}}{Theorem B}}\label{sec:THMB}

The key idea is to transfer the problem from $\Riem^\gamma(M)$ to $\Q_{c_{n,\gamma}}(M)$, through the identity in \eqref{eqn:KeyRgammaMtoQcM} and Barta's trick as stated in \cref{cor:Barta}. 
Hence, to prove \cref{main:existence-with-invariance-and-boundary} it is enough to construct a $\Gamma$-invariant metric $h$ and a $\Gamma$-invariant function $\varphi$ such that $\scal_h+c_{n,\gamma}|\nabla_h \varphi|^2_h>0$, satisfying $\varphi\equiv 0$ in a neighborhood of $\partial M$, $h|_{T\partial M}=q$, and $\mathrm{II}_h=k$.
In the case of closed manifolds, this will be done in \cref{thm:mainnew2}, where we exploit the theory developed in \cite{BaerHankeLocalFlexibility}.
In \cref{thm:mainnew} we prove the analogous statement for the case $\partial M\not=\varnothing$, again building on the main results of \cite{BaerHankeLocalFlexibility} and the $\Gamma$-invariant analog of \cite[Remark 3.2]{BaerHankeBoundaryConditions}.

\begin{proposition}\label{thm:mainnew2}
Let $\Gamma$ be a compact Lie group, let $n\geq 2$, and let $M^n$ be a
closed connected smooth $\Gamma$-manifold. Assume that the $\Gamma$-action
is not transitive. Then, for every $c>0$, there exist a smooth
$\Gamma$-invariant Riemannian metric $h$ on $M$ and a smooth
$\Gamma$-invariant function $\varphi$ on $M$ such that
\begin{equation}\label{eq:strict-main2}
    \scal_h+c|\nabla_h\varphi|_h^2>0
    \qquad\text{on }M.
\end{equation}
\end{proposition}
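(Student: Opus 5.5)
We mimic the base-point construction in the proof of \cref{thm:main}, with every object made $\Gamma$-invariant. The plan has three steps.

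\textbf{Step 1: an invariant function with controlled critical set.} We first produce a smooth $\Gamma$-invariant function $f$ whose critical set is a finite union of orbits $\Gamma z_1,\dots,\Gamma z_N$ on which we can later force positive scalar curvature. Since the action is not transitive, the orbit space $M/\Gamma$ has positive dimension. The natural candidate is a $\Gamma$-invariant Morse--Bott function, which by Wasserman's equivariant Morse theory is generic among invariant functions and has critical set a finite union of orbits. For instance, one can average a function over $\Gamma$ and then perturb it equivariantly.

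\textbf{Step 2: an invariant metric with $\scal>0$ near the critical orbits.} Next we need a $\Gamma$-invariant metric $h$ with $\scal_h>0$ on a $\Gamma$-invariant neighborhood $U$ of $\Crit(f)$. This is where \cite{BaerHankeLocalFlexibility} enters. Their local flexibility theorem, in its $\Gamma$-equivariant form, deforms a metric near a closed submanifold so that scalar curvature becomes positive near it, provided the submanifold has codimension at least one. That proviso holds here, because each orbit $\Gamma z_j$ is a closed submanifold of codimension at least $1$ by non-transitivity. A more hands-on alternative is an equivariant version of \cref{lem:local-bump}. One would take $u=-A\,\dist(\cdot,\Gamma z_j)^2$ in an invariant tubular neighborhood; with $d$ the codimension of the orbit, $-\Delta u\approx 2dA$ near the orbit, plus lower-order mean-curvature terms that are $O(A\cdot \dist)$. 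As before $|\nabla u|^2=4A^2\dist^2$, so the conformal change $e^{2u}h$ makes $\scal>0$ on a small invariant neighborhood. The choice of $\eta$ and $L$ is as in the proof of \cref{lem:local-improvement}. The distance function is $\Gamma$-invariant since $h$ is, and the cutoff can be chosen radial, so the result is invariant.

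\textbf{Step 3: scaling.} On the compact set $M\setminus U$ we have $\mu=\min|\nabla_h f|_h^2>0$. Choosing $\Lambda$ with $c\Lambda^2\mu>\max\{0,-\min\scal_h\}$ and setting $\varphi=\Lambda f$ gives \eqref{eq:strict-main2}, exactly as in \eqref{eq:basepoint}, and both $h$ and $\varphi$ are $\Gamma$-invariant.

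\textbf{Main obstacle.} I expect Steps 1--2 to be the hardest part, specifically guaranteeing that the invariant function has critical set consisting of finitely many orbits, or at least a compact union of orbits near which Step 2 applies uniformly. If equivariant Morse--Bott genericity is awkward, I would instead avoid finiteness and apply the Bär--Hanke flexibility to the whole closed invariant critical set. It suffices that $\Crit(f)$ is contained in a finite union of invariant submanifolds of positive codimension, for example the preimage of finitely many points under a smooth invariant map to $M/\Gamma$ restricted to the principal stratum, together with the lower-dimensional singular strata.
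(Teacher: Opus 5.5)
Your proposal is correct and follows the paper's proof essentially step for step. The paper's proof has the same three ingredients:
\begin{enumerate}
\item an invariant Morse function in the sense of Wasserman and Mayer, whose critical set is finitely many orbits, each of codimension $d_i\geq 1$ by non-transitivity;
\item the conformal factor $-\Lambda\sum_i\chi_i r_i^2$ built from invariant distance functions, which is your ``hands-on alternative'' and yields $\scal_h=\scal_{h_0}+4\Lambda(n-1)d_i$ on each critical orbit;
\item the rescaling $\varphi=Af$.
\end{enumerate}
On the orbits one has $\psi=0$ and $\mathrm d\psi=0$, so this exact evaluation already gives positivity there and hence on a neighborhood. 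The $\eta$/$L$ bookkeeping is therefore unnecessary, and so is the fallback in your ``main obstacle'' paragraph, since the equivariant Morse theory you cite already settles Step 1.
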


\begin{proof}
By equivariant Morse theory, there exists a smooth $\Gamma$-invariant
Morse function 
\[
    f:M\longrightarrow\mathbb R
\]
in the sense of \cite{WassermanEquivariantDifferentialTopology} and \cite{MayerInvariantMorseFunctions}, see also the summary in
\cite[Section 2, pp.~3--4]{BaerHankeLocalFlexibility}.
The equivariant Morse lemma
\cite[Satz~1.3]{MayerInvariantMorseFunctions} and compactness imply that
the critical set is a finite disjoint union of critical orbits
\[
    \Crit(f)=\mathcal O_1\sqcup\cdots\sqcup\mathcal O_N.
\]

Put $
    d_i\coloneqq \codim_M\mathcal O_i$. 
Then $d_i\geq 1$ for every $i$. Indeed, if $d_i=0$, the embedded orbit
$\mathcal O_i$ is open in $M$. It is also closed because $\Gamma$ is
compact; hence, connectedness would give $\mathcal O_i=M$, contrary to
the nontransitivity assumption.
\smallskip

Choose a $\Gamma$-invariant Riemannian metric $h_0$ on $M$. Let
\[
    r_i(x)\coloneqq \dist_{h_0}(x,\mathcal O_i).
\]
After
choosing sufficiently small pairwise disjoint $\Gamma$-invariant tubular neighborhoods
$U_i$ of the orbits $\mathcal{O}_i$, $r_i^2$ is smooth on $U_i$. Choose
$\Gamma$-invariant functions $\chi_i\in C^\infty_c(U_i,[0,1])$ that
are $\equiv 1$ near $\mathcal O_i$, and, for $\Lambda>0$, define
\[
    \psi\coloneqq -\Lambda\sum_{i=1}^N\chi_i r_i^2,
    \qquad
    h\coloneqq e^{2\psi}h_0,
\]
where $\chi_i r_i^2$ is extended by zero outside $U_i$. This is the
single-metric version of the conformal construction in
\cite[proof of Lemma~2.5, (2.6)]
{BaerHankeLocalFlexibility}. Along $\mathcal O_i$ one has
\[
    \psi=0,\qquad
    \dd\psi=0,\qquad
    \Delta_{h_0}\psi=-2\Lambda d_i.
\]
The conformal scalar-curvature formula \eqref{eq:conformal-scalar} therefore gives
\begin{equation}\label{eq:scalar-critical-orbit}
    \scal_h\big|_{\mathcal O_i}
    =
    \scal_{h_0}\big|_{\mathcal O_i}
    +4\Lambda(n-1)d_i.
\end{equation}
Since there are only finitely many compact critical orbits and $d_i\geq1$,
we may choose $\Lambda$ so large that $\scal_h>2$ on $\Crit(f)$.
Consequently, $\scal_h>1$ on some neighborhood of $\Crit(f)$.

Let
\[
    Z\coloneqq \{x\in M\mid\scal_h(x)\leq0\}.
\]
If $Z=\varnothing$, we take $\varphi=0$. Otherwise, $Z$ is compact and
disjoint from $\Crit(f)$, and hence
\[
    \mu\coloneqq \min_Z|\nabla_hf|_h^2>0,
    \qquad
    B\coloneqq \max_Z(-\scal_h)<\infty.
\]
Choose $A>0$ such that $cA^2\mu>B$ and set $\varphi\coloneqq Af$. Then, on $Z$,
\[
    \scal_h+c|\nabla_h\varphi|_h^2
    \geq -B+cA^2\mu>0,
\]
whereas on $M\setminus Z$ the scalar curvature is already positive.
Thus, \eqref{eq:strict-main2} holds. Finally, note that both $h$ and $\varphi$ are
$\Gamma$-invariant, as desired.
\end{proof}

Turning to the case of non-empty boundary, we begin with the following lemma, which is the $\Gamma$-invariant analog of \cite[Remark 3.2]{BaerHankeBoundaryConditions}.

\begin{lemma}
\label{lem:boundary-collar-detailed}
Let $n\geq 2$, and let $\Gamma$ be a compact Lie group. Let $M^n$ be a compact smooth $\Gamma$-manifold with nonempty boundary.
Let $q$ be a $\Gamma$-invariant Riemannian metric on $\partial M$, let
$k$ be a $\Gamma$-invariant symmetric $(0,2)$-tensor on $\partial M$, and let $\sigma\geq 0$.
Then there is an $\varepsilon > 0$, an open $\Gamma$-invariant collar neighborhood $U \approx [0,\varepsilon) \times \partial M$ of $\partial M$ in $M$ and a metric $h_{\mathrm{col}}$ on
$U$ such that
\[
  h_{\mathrm{col}}|_{T\partial M}=q,
  \qquad
  \mathrm{II}_{h_{\mathrm{col}}}=k,
  \qquad
  \scal_{h_{\mathrm{col}}}>\sigma .
\]
Here, the second fundamental form is computed along $\partial M \subset U$ and with respect to the outward unit normal.
\end{lemma}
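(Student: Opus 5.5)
We build $h_{\mathrm{col}}$ explicitly in a $\Gamma$-equivariant collar, with the scalar curvature made large by a steep quadratic term in the normal direction.

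\emph{Collar and ansatz.} Choose a $\Gamma$-invariant auxiliary metric on $M$ and let $\nu$ be its inward unit normal along $\partial M$. The normal exponential map gives a $\Gamma$-equivariant diffeomorphism $[0,\varepsilon_1)\times\partial M\to U$, where $\Gamma$ acts trivially on the first factor. This works because the geodesic flow of an invariant metric commutes with the isometric action. Writing $t$ for the collar coordinate, we look for a metric of the form
\[
  h_{\mathrm{col}} = \mathrm dt^2 + q_t,\qquad q_t \coloneqq q - 2t\,k - A t^2 q,
\]
with $A>0$ a constant to be chosen. Each $q_t$ is $\Gamma$-invariant since $q$ and $k$ are, so $h_{\mathrm{col}}$ is $\Gamma$-invariant.

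\emph{Boundary conditions.} At $t=0$ we have $h_{\mathrm{col}}|_{T\partial M}=q$. The outward normal is $-\partial_t$, and for metrics $\mathrm dt^2+q_t$ one has $\mathrm{II}(X,Y)=-\tfrac12\dot q_t(X,Y)|_{t=0}$ with respect to $-\partial_t$. Since $\dot q_0=-2k$, this gives $\mathrm{II}_{h_{\mathrm{col}}}=k$. I will double-check the sign convention against the paper's $\mathrm{II}_g(X,Y)=g(\nabla_X\nu,Y)$ for outward $\nu$; if it comes out as $-k$, replace $-2tk$ by $+2tk$.

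\emph{Scalar curvature.} We use the Gauss--Riccati formula for $\mathrm dt^2+q_t$:
\[
  \scal = \scal_{q_t} - \operatorname{tr}_{q_t}\ddot q_t + \tfrac34|\dot q_t|_{q_t}^2 - \tfrac14(\operatorname{tr}_{q_t}\dot q_t)^2 .
\]
At $t=0$ we have $\ddot q_0=-2Aq$, hence $-\operatorname{tr}_{q}\ddot q_0=2A(n-1)$. The remaining terms depend only on $q$ and $k$ and are bounded on compact $\partial M$ independently of $A$. Choosing $A$ large therefore gives $\scal_{h_{\mathrm{col}}}>\sigma+1$ along $\partial M$. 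By continuity and compactness of $\partial M$, there is an $\varepsilon\in(0,\varepsilon_1)$ such that $q_t$ remains positive definite and $\scal_{h_{\mathrm{col}}}>\sigma$ on $[0,\varepsilon)\times\partial M$. The set $U\approx[0,\varepsilon)\times\partial M$ is $\Gamma$-invariant because $t$ is an invariant function.

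The only real points to check are the sign convention for $\mathrm{II}$ and the precise Riccati formula. An alternative is to fix only the first-order data and then apply a conformal factor $e^{2u}$ with $u=-At^2$. Along $\partial M$ this gives $u=0$ and $\mathrm du=0$, so $q$ and $\mathrm{II}$ are unchanged, while $-\Delta u=2A$ raises the scalar curvature by $4A(n-1)$ at $t=0$, exactly as in \eqref{eq:scalar-critical-orbit}. I expect this conformal route to be the most robust if the direct formula gets messy.
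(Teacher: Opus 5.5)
Your proposal is correct and follows the paper's proof essentially verbatim. It uses the same invariant normal-exponential collar, the same ansatz $h_{\mathrm{col}}=\mathrm dt^2+q_t$ with $q_t=q-2tk-\Lambda t^2q$, and the same order of choices (first the quadratic coefficient large, then $\varepsilon$ small); the sign needs no change, since with $\nu=-\partial_t$ one gets $h_{\mathrm{col}}(\nabla_X\nu,Y)=-\tfrac12\dot q_0(X,Y)=k(X,Y)$ in the paper's convention, and the scalar curvature formula you quote is the correct standard one.
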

\begin{proof}
    The proof is a minor adaptation to the $\Gamma$-invariant case of the argument of \cite[Remark 3.2]{BaerHankeBoundaryConditions}, which covers the non-equivariant case. 
    
    First, let us fix an auxiliary $\Gamma$-invariant metric $g_0$ on $M$. 
    The normal exponential map of $g_0$ at the boundary gives an $\varepsilon > 0$ and a  $\Gamma$-equivariant diffeomorphism of $[0,\varepsilon)\times \partial M$ with some open neighborhood $U$ of $\partial M$. 
      In this identification, $\Gamma$ acts trivially on the $[0,\varepsilon)$-factor.
      Now, denoting the standard coordinate on $[0,\varepsilon)$ by $t$, consider the following metric: 
    \begin{equation}\label{eq:collar-metric}
  h_{\mathrm{col}}\coloneqq \dd t^2+q_t,
  \qquad \text{where} \qquad
  q_t\coloneqq q-2t k-\Lambda t^2 q.
\end{equation}
By construction, $h_{\mathrm{col}}$ is $\Gamma$-invariant. 
A short computation shows that $h_{\mathrm{col}}|_{T\partial M}=q$, and $\mathrm{II}_{h_{\mathrm{col}}}=k$. Moreover, first choosing $\Lambda$ big enough, and then $\varepsilon$ small enough, we also have $\scal_{h_{\mathrm{col}}}>\sigma$, compare \cite[Remark 3.2]{BaerHankeBoundaryConditions}.
\end{proof}

\begin{proposition}\label{thm:mainnew}
Let $\Gamma$ be a compact Lie group, let $n\geq 2$, and let $M^n$ be a
compact connected smooth $\Gamma$-manifold with nonempty boundary.  
Let $c>0$. Let $q$
be any $\Gamma$-invariant boundary metric and let $k$
be any $\Gamma$-invariant symmetric $(0,2)$-tensor on $\partial M$. 

Then there are a smooth
$\Gamma$-invariant Riemannian metric $h$ on $M$ and a smooth
$\Gamma$-invariant function $\varphi$ on $M$ such that
\begin{equation}\label{eq:strict-main}
  \scal_h+c|\nabla_h\varphi|_h^2>0\qquad\text{on }M,
\end{equation}
\begin{equation}\label{eq:boundary-data}
  h|_{T\partial M}=q,\qquad \mathrm{II}_h=k, \qquad 
 \varphi\equiv 0\,\quad \text{on a neighborhood of $\partial M$}.
\end{equation}
\end{proposition}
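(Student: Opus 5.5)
We combine the collar construction of \cref{lem:boundary-collar-detailed} with the interior argument from the proof of \cref{thm:mainnew2}, using a Morse function that has no critical points near the boundary.

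\emph{Step 1: the collar.} Fix a $\Gamma$-invariant background metric $h_0$ on $M$. Apply \cref{lem:boundary-collar-detailed} with $\sigma=1$. This gives a $\Gamma$-invariant collar $U\approx[0,\varepsilon)\times\partial M$ and a metric $h_{\mathrm{col}}$ on $U$ with
\[
h_{\mathrm{col}}|_{T\partial M}=q,\qquad \mathrm{II}_{h_{\mathrm{col}}}=k,\qquad \scal_{h_{\mathrm{col}}}>1 .
\]
Let $\chi$ be a $\Gamma$-invariant cutoff depending only on $t$, with $\chi\equiv1$ for $t\le\varepsilon/3$ and $\chi\equiv0$ for $t\ge2\varepsilon/3$. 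Set
\[
h_1\coloneqq\chi h_{\mathrm{col}}+(1-\chi)h_0 .
\]
Then $h_1$ is $\Gamma$-invariant, has the prescribed boundary data, and satisfies $\scal_{h_1}>1$ on $V\coloneqq\{t<\varepsilon/3\}$. On the transition region $\scal_{h_1}$ may be negative; this is allowed, since it will be absorbed by the gradient term.

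\emph{Step 2: a Morse function with no critical points near the boundary.} Choose a $\Gamma$-invariant Morse function $f$ on $M$ in the sense of Wasserman and Mayer with $f=t$ on the collar $\{t<\varepsilon/2\}$. This can be done by taking the collar coordinate there, extending $\Gamma$-invariantly, and perturbing by an equivariant generic perturbation supported in the interior, which is the relative version of equivariant Morse theory. Then $\mathrm df\neq0$ on $\{t<\varepsilon/2\}$, and $\Crit(f)$ is a finite union of compact orbits $\mathcal O_i$ in the interior $\{t\ge\varepsilon/2\}$. Each orbit has $\codim\mathcal O_i\ge1$ automatically: an orbit of codimension $0$ would be open and closed in $M$, hence equal to $M$, which is impossible because $\partial M\neq\varnothing$ and $\Gamma$ preserves $\partial M$.

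\emph{Step 3: making $\scal>0$ at the critical orbits.} Exactly as in the proof of \cref{thm:mainnew2}, set
\[
h\coloneqq e^{2\psi}h_1,\qquad \psi\coloneqq-\Lambda\sum_i\chi_i r_i^2 ,
\]
where the $\chi_i$ are supported in small invariant tubes around $\mathcal O_i$ that are disjoint from $\{t\le\varepsilon/2\}$. Then $h=h_1$ near $\partial M$, so the boundary data and the bound $\scal_h>1$ on $V$ are preserved. By \eqref{eq:scalar-critical-orbit}, for $\Lambda$ large we get $\scal_h>1$ on a neighborhood of $\Crit(f)$.

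\emph{Step 4: the gradient term, with $\varphi$ vanishing near $\partial M$.} We cannot take $\varphi=Af$ directly, since $\varphi$ must vanish near $\partial M$. Instead take $\varphi\coloneqq A\beta(f)$, where $\beta$ is monotone and locally constant near the boundary; subtracting the constant, $\varphi\equiv0$ near $\partial M$. Concretely, since $f=t$ there, choose $\beta(s)=0$ for $s\le\varepsilon/6$ and $\beta'>0$ for $s>\varepsilon/6$. Then $\mathrm d\varphi=A\beta'(f)\,\mathrm df$ vanishes exactly on $\{t\le\varepsilon/6\}\cup\Crit(f)$. Hence the compact set $Z=\{\scal_h\le0\}$, which lies in $\{t\ge\varepsilon/3\}$ and away from $\Crit(f)$, satisfies $\min_Z|\nabla_h\varphi|^2>0$. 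Choosing $A$ large, as in the proof of \cref{thm:mainnew2}, gives \eqref{eq:strict-main}.

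\emph{Main obstacle.} The main obstacle is Step 2: producing an equivariant Morse function with a prescribed noncritical form near $\partial M$. An alternative that avoids the relative Morse theory is the following. Double $M$ equivariantly to a closed manifold $DM$ and take an invariant Morse function on $DM$ that is invariant under the reflection, with $\partial M$ a regular level. Another option is to simply use any invariant Morse function on $M$ whose critical orbits avoid a thinner collar, after shrinking $\varepsilon$; this is possible because $\Crit(f)$ is compact, and one can arrange $\partial M$ to be regular. I would try the latter first.
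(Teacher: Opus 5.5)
Your argument is correct in outline, but it takes a different route from the paper. The paper uses no Morse theory in the boundary case. It takes a $\Gamma$-invariant metric $g_+$ with $\scal_{g_+}>0$ on all of $M$ from \cite[Theorem I]{BaerHankeLocalFlexibility}. It glues this to $h_{\mathrm{col}}$ via $h=\rho(t)h_{\mathrm{col}}+(1-\rho(t))g_+$, which confines $\{\scal_h\le 0\}$ to the transition band $(a,b)\times\partial M$. It then takes $\varphi=AF(t)$ with $F'>0$ on that band, so the invariant collar coordinate supplies a nowhere-vanishing invariant gradient exactly where it is needed. You instead start from an arbitrary invariant metric and let the gradient term absorb all negative scalar curvature in the interior. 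You pay for this with equivariant Morse theory and the conformal bumps at critical orbits from \cref{thm:mainnew2}. In effect you run the closed-case argument relative to a collar. This treats both cases uniformly and dispenses with the B\"ar--Hanke existence theorem for invariant PSC metrics. The paper's proof, by contrast, is shorter and elementary once that theorem is granted.

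Two points need repair. First, in Step 4 you claim that $\mathrm d\varphi=A\beta'(f)\,\mathrm df$ vanishes only on $\{t\le\varepsilon/6\}\cup\Crit(f)$. This requires $f>\varepsilon/6$ on $M\setminus\{t\le\varepsilon/6\}$, which Step 2 does not provide: away from the collar your extension of $t$ may take values $\le\varepsilon/6$, where $\beta'(f)=0$. You can build this requirement into Step 2. More simply, set $\varphi\coloneqq A\zeta(t)f$, where $\zeta$ is a function of $t$ that vanishes near $\partial M$ and is $\equiv1$ on $\{t>\varepsilon/4\}$ and off the collar. Since $Z\subset\{t\ge\varepsilon/3\}$, you get $\varphi=Af$ near $Z$. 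With this change, $f=t$ near $\partial M$ is no longer needed: any invariant Morse function whose critical orbits avoid a collar works, after shrinking $\varepsilon$. That is your last alternative, and it removes the ``main obstacle''.

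Second, your doubling alternative fails as stated. A function invariant under the reflection of $DM$ has $\mathrm df$ annihilating the normal direction along $\partial M$. So $\partial M$ cannot be a regular level, and the extrema of $f|_{\partial M}$ are critical points lying on $\partial M$. Drop the reflection invariance instead: $C^1$-approximate a $\Gamma$-invariant function on $DM$ that equals the signed collar coordinate near $\partial M$ by a $\Gamma$-invariant Morse function, and restrict to $M$.
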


\begin{proof}
By \cite[Theorem I]{BaerHankeLocalFlexibility}, there is a smooth $\Gamma$-invariant metric $g_+$ on $M$ with
\[
  \scal_{g_+}>0\quad\text{on }M.
\]
By \cref{lem:boundary-collar-detailed}, we find an invariant metric
$h_{\mathrm{col}}$ on a collar $U \approx [0,\varepsilon)\times \partial M$ of $\partial M$ in $M$ which induces $q$ on the boundary, has
second fundamental form $k$ with respect to the outward unit normal, and has positive scalar curvature on $[0,a]\times \partial M $, with $a<\varepsilon$.

Fix $b$ such that $0<a<b<\varepsilon$.
Let
$\rho:[0,\varepsilon)\to[0,1]$ be smooth, with $\rho=1$ in a neighborhood of $[0,a]$
and $\rho=0$ in a neighborhood of $[b,\varepsilon)$.  On the collar define
\begin{equation}\label{eq:boundary-interpolation}
  h=\rho(t)h_{\mathrm{col}}+(1-\rho(t))g_+,
\end{equation}
and set $h=g_+$ outside the collar. We have that $h$ is a
smooth $\Gamma$-invariant metric. Since it agrees with $h_{\mathrm{col}}$ near the
boundary, we have
\[
  h|_{T\partial M}=q,
  \qquad \mathrm{II}_h=k.
\]
Moreover, $\scal_h>0$ both in $[0,a]\times\partial M$ and outside $[0,b]\times\partial M$.  Thus,
\[
  Z_h\coloneqq \{\scal_h\leq0\}\Subset (a,b)\times\partial M.
\]
If $Z_h=\varnothing$, choose $\varphi=0$, and the proof is complete. Otherwise, there are numbers $
  0<a<a_0<a_1<b$
such that the $t$-coordinate of every point of $Z_h$ lies in
$(a_0,a_1)$.  Choose a smooth function
$F:[0,\varepsilon)\to\mathbb R$ satisfying
\[
  F\equiv 0\text{ in $[0,a]$},
  \qquad F'(t)>0\text{ for }t\in[a_0,a_1],
  \qquad F\text{ $\equiv\Lambda$ on }[b,\varepsilon),
\]
for some $\Lambda>0$.
Define $f\coloneqq F(t)$ on the collar and extend it to be $\equiv \Lambda$ to
the rest of $M$.  The collar coordinate $t$ is invariant, hence $f$ is
invariant.  By construction, we have
  $f=\dd f=0$ near $\partial M$, while
$
  \dd f=F'(t)\dd t\neq0
$
on $Z_h$.  Hence, if $A\geq 0$ is large enough, and
$\varphi\coloneqq Af$, then
\[
  \scal_h+c|\nabla_h\varphi|_h^2>0
\]
on $Z_h$, and thus on all of $M$, as desired. Note that all conditions in \eqref{eq:boundary-data} hold as well.
\end{proof}

\begin{proof}[Proof of \cref{main:existence-with-invariance-and-boundary}]
    Let $c\coloneqq c_{n,\gamma}$ defined as in \eqref{eqn:cngamma}. Let $(h,\varphi)$ be the pair given by \cref{thm:mainnew} (if $\partial M\not=\varnothing$) or by \cref{thm:mainnew2} (if $\partial M=\varnothing$). Let $a\coloneqq \frac{2(n-1)}{\gamma}$. By \eqref{eqn:KeyRgammaMtoQcM} we have that, setting $g\coloneqq e^{2\varphi}h$, and using \eqref{eq:strict-main} (resp., \eqref{eq:strict-main2}),
    \begin{equation}\label{eqn:BartaBoundary}
    -\gamma\Delta_g(e^{-a\varphi})+\scal_g (e^{-a\varphi}) >0.
    \end{equation}
    Moreover, when $\partial M\neq\varnothing$, by \eqref{eq:boundary-data} we have that $g|_{T\partial M}=q$ and $\mathrm{II}_g=k$; and by \eqref{eqn:BartaBoundary} and the fact that $e^{-a\varphi}\equiv 1$ in a neighborhood of $\partial M$, we have $g(\nabla_g e^{-a\varphi},\nu_g)=0$. Hence, we conclude by \cref{cor:Barta} that
    \[
    -\gamma\Delta_g+\scal_g>0,
    \]
    as desired.
\end{proof}

\appendix
\crefalias{section}{appendix}

\section{Barta's trick}
We recall the elementary form of Barta's trick from \cite{Barta1937} that we will need in the paper. We claim no originality; we record these results for the reader's
convenience.
\begin{lemma}\label{lem:Barta}
    Let $(M,g)$ be a compact Riemannian manifold, possibly with nonempty boundary, and let  $\gamma\in\mathbb R$. Let $V\in C^\infty(M)$, and $L\coloneqq -\gamma\Delta+V$. Let $C^\infty(M)\ni u>0$. Then for every
$v\in C^\infty(M)$,
\begin{align}\label{eq:ground-state-transform-boundary}
\int_M\left(\gamma|\nabla v|^2+V v^2\right)\dvol
&=
\int_M\gamma u^2
\left|\nabla\left(\frac{v}{u}\right)\right|^2\dvol
+\int_M\frac{L u}{u}v^2\dvol \\
&\quad
+\gamma\int_{\partial M}
\frac{g(\nabla u,\nu)}{u}v^2\dd \sigma,
\end{align}
where $\nu$ is the outward unit normal to $\partial M$, and $\sigma$ is the surface measure on $\partial M$.
\end{lemma}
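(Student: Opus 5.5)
We prove the identity by direct computation: write $v=u\,w$ with $w\coloneqq v/u$, which is smooth because $u>0$, and expand the left-hand side. Pointwise,
\[
\nabla v = w\nabla u + u\nabla w,
\qquad
|\nabla v|^2 = w^2|\nabla u|^2 + 2uw\,g(\nabla u,\nabla w) + u^2|\nabla w|^2 .
\]
Multiplying by $\gamma$ and integrating, the term $\gamma\int_M u^2|\nabla w|^2\dvol$ is already the first term on the right-hand side. It therefore remains to show that
\[
\gamma\int_M\left(w^2|\nabla u|^2+2uw\,g(\nabla u,\nabla w)\right)\dvol+\int_M Vu^2w^2\dvol
\]
equals $\int_M \frac{Lu}{u}v^2\dvol+\gamma\int_{\partial M}\frac{g(\nabla u,\nu)}{u}v^2\dd\sigma$.

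The key observation is that the first integrand is a divergence-type expression:
\[
w^2|\nabla u|^2+2uw\,g(\nabla u,\nabla w)=g\bigl(\nabla u,\nabla(uw^2)\bigr).
\]
We apply the divergence theorem (Green's first identity) to the vector field $uw^2\nabla u$:
\[
\int_M g\bigl(\nabla u,\nabla(uw^2)\bigr)\dvol=-\int_M uw^2\Delta u\dvol+\int_{\partial M}uw^2\,g(\nabla u,\nu)\dd\sigma .
\]
Since $uw^2=v^2/u$, multiplying by $\gamma$ turns the interior term into $\int_M\frac{-\gamma\Delta u}{u}v^2\dvol$ and the boundary term into $\gamma\int_{\partial M}\frac{g(\nabla u,\nu)}{u}v^2\dd\sigma$. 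Adding $\int_M Vu^2w^2\dvol=\int_M\frac{Vu}{u}v^2\dvol$ combines the interior terms into $\int_M\frac{Lu}{u}v^2\dvol$, which is exactly the claimed identity.

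No step is a genuine obstacle; the only points needing care are the following. The sign convention must be kept consistent: $\Delta$ is the non-positive Laplacian and $\nu$ is the outward normal, so Green's formula carries a plus sign on the boundary term. Smoothness of $w$ up to the boundary follows from $u>0$ on the compact manifold $M$. Finally, since $\gamma\in\R$ is arbitrary, no positivity of $\gamma$ is used.
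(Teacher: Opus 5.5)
Your proof is correct and is essentially the paper's argument. The paper integrates the pointwise identity $\gamma|\nabla v|^2+Vv^2=\gamma u^2|\nabla(v/u)|^2+\frac{Lu}{u}v^2+\gamma\operatorname{div}\bigl(\frac{v^2}{u}\nabla u\bigr)$, and your application of the divergence theorem to the vector field $uw^2\nabla u=\frac{v^2}{u}\nabla u$ is that same computation carried out explicitly.
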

\begin{proof}
    It is enough to integrate the pointwise identity
    \begin{equation}\label{eq:pointwise-ground-state-transform}
\gamma|\nabla v|^2+V v^2
=
\gamma u^2\left|\nabla\left(\frac{v}{u}\right)\right|^2
+\frac{L u}{u}v^2
+\gamma\operatorname{div}\left(\frac{v^2}{u}\nabla u\right).
\end{equation}
The computation is classical.
\end{proof}
\begin{proposition}\label{cor:Barta}
    Let $(M,g)$ be a compact connected Riemannian manifold, possibly with nonempty boundary and unit outward normal $\nu$, and let  $\gamma>0$. Let $V\in C^\infty(M)$, $L\coloneqq -\gamma\Delta+V$, and let $L_N$ be the Neumann realization\footnote{Namely, the functions in the domain of $L_N$ have normal derivative $\equiv 0$ on the boundary.} of $L$. The following are equivalent:
    \begin{enumerate}
        \item $L>0$ in the sense of \eqref{eqn:-gammaDelta+r}.
        \item There is $C^\infty(M)\ni u>0$ such that $Lu>0$ on $M$ and $g(\nabla u,\nu)\geq 0$ on $\partial M$.
        \item There is $C^\infty(M)\ni u>0$ such that $Lu>0$ on $M$ and $g(\nabla u,\nu)= 0$ on $\partial M$.
    \end{enumerate}
    Moreover, if $\lambda_1^N(L)$ denotes the first Neumann eigenvalue
    of $L$, then
    \[
        \ker\bigl(L_N-\lambda_1^N(L)\bigr)=\mathbb R u_1,
    \]
    where $u_1>0$ is a first Neumann eigenfunction. In particular,
    whenever the equivalent conditions above hold, we have
    $\lambda_1^N(L)>0$.
\end{proposition}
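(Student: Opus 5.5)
The plan is to prove the cycle (1)$\Rightarrow$(3)$\Rightarrow$(2)$\Rightarrow$(1), and to treat the statement about the kernel first, since the implication (1)$\Rightarrow$(3) will rely on it.

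\emph{Step 1: simplicity of the bottom of the spectrum.} The quadratic form $Q(f)=\int_M(\gamma|\nabla f|^2+Vf^2)\dvol$ is defined on $H^1(M)$. Since $\gamma>0$, it is bounded below and closed, and the embedding $H^1\hookrightarrow L^2$ is compact. Hence the infimum in \eqref{eqn:-gammaDelta+r} is attained by some $u_1\in H^1$. Minimizers of $Q$ are exactly the weak Neumann eigenfunctions for $\lambda_1^N(L)$, and elliptic regularity up to the boundary gives $u_1\in C^\infty(M)$ with $g(\nabla u_1,\nu)=0$.

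Next, $Q(|f|)=Q(f)$, so $|u_1|$ is also a minimizer and therefore a smooth nonnegative eigenfunction. Writing the eigen-equation as $-\gamma\Delta|u_1|+(V-\lambda)^{+}|u_1|=(V-\lambda)^{-}|u_1|\ge0$, the strong maximum principle applies in the interior. At boundary points, Hopf's lemma together with the vanishing Neumann derivative gives the same conclusion. It follows that $|u_1|>0$ everywhere, since $M$ is connected.

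To get $\ker=\mathbb R u_1$: every eigenfunction for $\lambda_1^N$ has constant sign by the argument above. Given two such eigenfunctions, a suitable linear combination vanishes at a chosen point; it is again an eigenfunction that is not strictly of one sign, so it must be identically zero. Hence the eigenspace is one-dimensional.

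\emph{Step 2: the cycle of implications.}
\begin{itemize}
\item[(1)$\Rightarrow$(3):] Take $u=u_1$, normalized to be positive. Then $Lu=\lambda_1^N u>0$ and the Neumann condition holds.
\item[(3)$\Rightarrow$(2):] This is immediate.
\item[(2)$\Rightarrow$(1):] Apply \cref{lem:Barta} with this $u$. For every $v$, the first term and the boundary term on the right-hand side are nonnegative, so
\[
Q(v)\ge\int_M \frac{Lu}{u}v^2\dvol\ge \Bigl(\min_M \frac{Lu}{u}\Bigr)\int_M v^2\dvol .
\]
The minimum is positive by compactness. Hence $\lambda_1^N(L)>0$, which is (1).
\end{itemize}
Finally, "in particular $\lambda_1^N(L)>0$" is just condition (1) restated.

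\emph{Main obstacle.} The delicate step is the positivity of $u_1$ up to the boundary, that is, the boundary Hopf argument combined with the regularity of $|u_1|$. To sidestep it, I would first try an alternative. Suppose $|u_1|$, a smooth nonnegative eigenfunction, vanished at some point. Apply Barta's identity with $u=|u_1|+\epsilon$ and let $\epsilon\to0$; alternatively, invoke the unique continuation principle for the zero set. If both of these are awkward, I would simply cite the standard Hopf lemma for Neumann problems.
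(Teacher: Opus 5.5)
Your proof is correct, and most of it matches the paper's proof. The paper also runs the cycle (1)$\Rightarrow$(3)$\Rightarrow$(2)$\Rightarrow$(1), using the positive first Neumann eigenfunction and \cref{lem:Barta}. It also gets positivity of $u_1$ from ``$|v|$ is again a minimizer'' plus the strong maximum principle and the Hopf lemma. The one genuine difference is how you prove $\ker\bigl(L_N-\lambda_1^N(L)\bigr)=\mathbb R u_1$.

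The paper fixes a single positive eigenfunction $u_1$. For any other first eigenfunction $v$, it checks $\operatorname{div}\bigl(u_1^2\nabla(v/u_1)\bigr)=0$ and $g\bigl(\nabla(v/u_1),\nu\bigr)=0$. Integrating by parts then gives $\int_M u_1^2|\nabla(v/u_1)|^2\dvol=0$. This is just the ground-state identity of \cref{lem:Barta} with $u=u_1$. So it needs positivity of only one eigenfunction and no further maximum-principle input.

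You instead apply the maximum-principle/Hopf argument to every minimizer. You conclude that every nonzero first eigenfunction is nowhere zero, and then observe that $w(p)u_1-u_1(p)w$ vanishes at $p$, hence is zero. Your route is pointwise and avoids the extra integration by parts. It also gives the slightly stronger by-product that no first eigenfunction has a zero. The paper's route uses the maximum principle less and stays inside the variational framework of Barta's trick.

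The fallback ideas in your last paragraph are unnecessary. Unique continuation by itself would not exclude a zero of a nonnegative solution, since such a zero is not a priori of infinite order. The Hopf argument you give first is the right one, and it is what the paper invokes.
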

\begin{proof}
    The implication $(3)\Rightarrow (2)$ is immediate. The implication $(2)\Rightarrow (1)$ readily follows from \cref{lem:Barta}. The implication $(1)\Rightarrow (3)$ follows by taking any $u>0$ in the first Neumann eigenspace of $L$ (see below to see why $u$ can be chosen $>0$).
    
    The second part of the statement is also classical. The fact that a first Neumann eigenfunction $u_1$ can be chosen positive is a consequence of three facts: if $v$ is a minimizer of the Rayleigh quotient, so is $|v|$; the strong maximum principle; and the Hopf lemma. Finally, let $u_1>0$ be a positive first Neumann eigenfunction and $v$ be any other first Neumann eigenfunction. It can be directly verified that 
    \[
    \mathrm{div}\left(u_1^2\nabla\left(\frac{v}{u_1}\right)\right)=0, \qquad g\left(\nabla\left(\frac{v}{u_1}\right),\nu\right)=0.
    \]
    Thus, integrating by parts $\frac{v}{u_1}\mathrm{div}\left(u_1^2\nabla\left(\frac{v}{u_1}\right)\right)$ gives $\int_M u_1^2\left|\nabla\left(\frac{v}{u_1}\right)\right|^2=0$, from which $v=cu_1$ for some $c\in \mathbb R$, as desired.
\end{proof}

\section{Continuous dependence of the first eigenfunction on parameters}
\label{appendix:cont_dep}
    We give a short justification for \cref{prop:first-eigenfunction-cts-dependence}.
    The argument is classical and has already appeared in the literature, e.g., in \cite[p. 155]{DaiWangWei2005} and \cite[Remark 2.2 \& Footnote (10)]{LiMantoulidis2023}. 
Fix $g_0\in\Riem(M)$ and write
\[
 u_0\coloneqq u_{g_0},
 \qquad
 \lambda_0\coloneqq \lambda_1(L_{g_0}).
\]
Fix a background metric to define Sobolev spaces (see, e.g., \cite[Chapter 4]{TaylorPDEI}). For an integer $s>n/2$,
let $\Riem_{s+2}(M)$ denote the space of $H^{s+2}$-Riemannian metrics on $M$ and
consider
\begin{equation}\label{eq:eigenpair-IFT-map}
 \begin{split}
 \mathcal F_s:\Riem_{s+2}(M)\times H^{s+2}(M)\times\mathbb R
 &\longrightarrow H^s(M)\times\mathbb R,\\
 \mathcal F_s(g,v,\lambda)
 &\coloneqq \left(L_gv-\lambda v,\ \int_Mv^2\,\dvol_g-1\right).
 \end{split}
\end{equation}
This is a smooth map between Banach manifolds. Its derivative in the
$(v,\lambda)$ variables at $(g_0,u_0,\lambda_0)$ is
\begin{equation}\label{eq:eigenpair-linearization}
 (w,\eta)\longmapsto
 \left((L_{g_0}-\lambda_0)w-\eta u_0,
 2\langle u_0,w\rangle_{L^2(g_0)}\right),
\end{equation}
seen as a map from $H^{s+2}(M)\times \mathbb R$ to $H^{s}(M)\times \mathbb R$. 
Since $\ker(L_{g_0}-\lambda_0)=\mathbb Ru_0$, elliptic Fredholm theory gives that \eqref{eq:eigenpair-linearization} is an isomorphism between the latter two spaces.

As a consequence, the Banach-space implicit function theorem gives smooth local maps 
\[
g\mapsto v_s(g)\in H^{s+2}(M), \qquad g\mapsto\lambda_s(g)\in\mathbb R,
\]
defined on a neighborhood $U_s$ of $g_0$ in $\Riem_{s+2}(M)$ 
such that 
\[
L_{g}v_s(g)=\lambda_s(g)v_s(g), \qquad \int_M v_s(g)^2\dvol_g=1.
\]
Since $s>n/2$, we have
$H^{s+2}(M)\hookrightarrow C^0(M)$. Moreover, considering that $u_0>0$, by possibly shrinking $U_s$ we can assume $v_s(g)>0$ for every $g\in U_s$.

We claim that for every $g\in U_s\cap \Riem(M)$, $v_s(g)$ must be the positive normalized first eigenfunction of $L_g$.
Let $\mu_s(g)\in\R$ be the first eigenvalue of $L_g$, and $w_s(g)\in \ker(L_{g}-\mu_s(g))$ be the unique (compare \cref{cor:Barta}) smooth positive function such that $\int_M w_s^2(g)\dvol_g=1$.
By self-adjointness of $L_g$ we have 
\[
\begin{aligned}
\mu_s(g) \int_M v_s(g)w_s(g)\dvol_g&= \int_M L_g w_s(g)v_s(g)\dvol_g = \int_M w_s(g)L_gv_s(g)\dvol_g \\
&= \lambda_s(g) \int_M v_s(g)w_s(g)\dvol_g.
\end{aligned}
\]
Thus, since $\int_M v_s(g)w_s(g)\dvol_g>0$, we get $\mu_s(g)=\lambda_s(g)$. Hence, $v_s\in \ker(L_g-\mu_s(g))\ni w_s$, which is a one-dimensional subspace by \cref{cor:Barta}. Since $\int v_s(g)^2\dvol_g=\int w_s(g)^2\dvol_g$, we conclude $w_s(g)=v_s(g)$, as desired.

Thus, for every $g\in U_s\cap\Riem(M)$ we have $v_s(g)=u_g$. Given $r\geq0$, choose $s$ sufficiently large that
$H^{s+2}(M)\hookrightarrow C^r(M)$. The local branches obtained at different
Sobolev indices agree on their common domains on $\Riem(M)$ by the uniqueness of the positive
normalized first eigenfunction. The preceding argument gives
continuity near $g_0$ of $g\mapsto u_g$ in every $C^r$ topology. Hence, the map is continuous near $g_0$ in the $C^\infty$-topology. Since $g_0$ is arbitrary, the map $g\mapsto u_g$ is continuous in the $C^\infty$-topology, as desired.\hfill$\Box$

% \bibliographystyle{plain} % or alpha, abbrv, unsrt
% \bibliography{ref}       % filename without .bib

\printbibliography

@article{Barta1937,
 author = {Barta, János},
 title = {Sur la vibration fondamentale d'une membrane.},
 fjournal = {Comptes Rendus Hebdomadaires des S{\'e}ances de l'Acad{\'e}mie des Sciences, Paris},
 journal = {C. R. Acad. Sci., Paris},
 issn = {0001-4036},
 volume = {204},
 pages = {472--473},
 year = {1937},
 language = {French},
 zbMATH = {2522248},
 JFM = {63.0762.02}
}

@article{BERW,
 author = {Botvinnik, Boris and Ebert, Johannes and Randal-Williams, Oscar},
 title = {Infinite loop spaces and positive scalar curvature},
 fjournal = {Inventiones Mathematicae},
 journal = {Invent. Math.},
 issn = {0020-9910},
 volume = {209},
 number = {3},
 pages = {749--835},
 year = {2017},
 language = {English},
 doi = {10.1007/s00222-017-0719-3},
 zbMATH = {6786971},
 Zbl = {1377.53067}
}

@article{CS,
 author = {Crowley, Diarmuid and Schick, Thomas and Steimle, Wolfgang},
 title = {Harmonic spinors and metrics of positive curvature via the {Gromoll} filtration and {Toda} brackets},
 fjournal = {Journal of Topology},
 journal = {J. Topol.},
 issn = {1753-8416},
 volume = {11},
 number = {4},
 pages = {1077--1099},
 year = {2018},
 language = {English},
 doi = {10.1112/topo.12081},
 zbMATH = {7015305},
 Zbl = {1466.57013}
}

@article{ERW,
 author = {Ebert, Johannes and Randal-Williams, Oscar},
 title = {The positive scalar curvature cobordism category},
 fjournal = {Duke Mathematical Journal},
 journal = {Duke Math. J.},
 issn = {0012-7094},
 volume = {171},
 number = {11},
 pages = {2275--2406},
 year = {2022},
 language = {English},
 doi = {10.1215/00127094-2022-0023},
 zbMATH = {7574071},
 Zbl = {1502.53083}
}

@article{HSS14,
 author = {Hanke, Bernhard and Schick, Thomas and Steimle, Wolfgang},
 title = {The space of metrics of positive scalar curvature},
 fjournal = {Publications Math{\'e}matiques},
 journal = {Publ. Math., Inst. Hautes {\'E}tud. Sci.},
 issn = {0073-8301},
 volume = {120},
 pages = {335--367},
 year = {2014},
 language = {English},
 doi = {10.1007/s10240-014-0062-9},
 zbMATH = {6381123},
 Zbl = {1321.58008}
}

@article{AkutagawaBotvinnik-relative-yamabe-invariant,
 author = {Akutagawa, Kazuo and Botvinnik, Boris},
 title = {The relative {Yamabe} invariant},
 fjournal = {Communications in Analysis and Geometry},
 journal = {Commun. Anal. Geom.},
 issn = {1019-8385},
 volume = {10},
 number = {5},
 pages = {935--969},
 year = {2002},
 language = {English},
 doi = {10.4310/CAG.2002.v10.n5.a2},
 zbMATH = {1925893},
 Zbl = {1034.58007}
}

@misc{PerelmanEntropy,
  title={The entropy formula for the Ricci flow and its geometric applications}, 
  author={Grisha Perelman},
  year={2002},
  eprint={math/0211159},
  archivePrefix={arXiv},
  url={https://arxiv.org/abs/math/0211159}, 
}

@article{HirschKazarasKhuriZhang,
 author = {Hirsch, Sven and Kazaras, Demetre and Khuri, Marcus and Zhang, Yiyue},
 title = {Spectral torical band inequalities and generalizations of the {Schoen}-{Yau} black hole existence theorem},
 fjournal = {IMRN. International Mathematics Research Notices},
 journal = {Int. Math. Res. Not.},
 issn = {1073-7928},
 volume = {2024},
 number = {4},
 pages = {3139--3175},
 year = {2024},
 language = {English},
 doi = {10.1093/imrn/rnad129},
 zbMATH = {7930694},
 Zbl = {1558.83060}
}

@article{KazdanWarner,
 author = {Kazdan, Jerry L. and Warner, Frank W.},
 title = {Scalar curvature and conformal deformation of {Riemannian} structure},
 fjournal = {Journal of Differential Geometry},
 journal = {J. Differ. Geom.},
 issn = {0022-040X},
 volume = {10},
 pages = {113--134},
 year = {1975},
 language = {English},
 doi = {10.4310/jdg/1214432678},
 zbMATH = {3464460},
 Zbl = {0296.53037}
}

@article{BotRos,
 author = {Botvinnik, Boris and Rosenberg, Jonathan},
 title = {Generalized positive scalar curvature on {{\(\text{spin}^c\)}} manifolds},
 fjournal = {Annals of Global Analysis and Geometry},
 journal = {Ann. Global Anal. Geom.},
 issn = {0232-704X},
 volume = {66},
 number = {4},
 pages = {20},
 note = {Id/No 18},
 year = {2024},
 language = {English},
 doi = {10.1007/s10455-024-09977-6},
 zbMATH = {7948989},
 Zbl = {1556.53051}
}

@article{DaiWangWei2005,
 author = {Dai, Xianzhe and Wang, Xiaodong and Wei, Guofang},
 title = {On the stability of {Riemannian} manifold with parallel spinors},
 fjournal = {Inventiones Mathematicae},
 journal = {Invent. Math.},
 issn = {0020-9910},
 volume = {161},
 number = {1},
 pages = {151--176},
 year = {2005},
 language = {English},
 doi = {10.1007/s00222-004-0424-x},
 zbMATH = {2190622},
 Zbl = {1075.53042}
}

@book{TaylorPDEI,
 author = {Taylor, Michael E.},
 title = {Partial differential equations {I}. {Basic} theory},
 edition = {3rd corrected and expanded edition},
 fseries = {Applied Mathematical Sciences},
 series = {Appl. Math. Sci.},
 issn = {0066-5452},
 volume = {115},
 isbn = {978-3-031-33858-8; 978-3-031-33861-8; 978-3-031-33859-5},
 year = {2023},
 publisher = {Cham: Springer},
 language = {English},
 doi = {10.1007/978-3-031-33859-5},
 zbMATH = {7756363},
 Zbl = {1527.35002}
}

@article{Palais1966,
  author   = {Palais, Richard S.},
  title    = {Homotopy theory of infinite dimensional manifolds},
  journal  = {Topology},
  volume   = {5},
  number   = {1},
  pages    = {1--16},
  year     = {1966},
  month    = mar,
  issn     = {0040-9383},
  doi      = {10.1016/0040-9383(66)90002-4},
  mrnumber = {MR0189028},
  zbl      = {0138.18302}
}

@article{WassermanEquivariantDifferentialTopology,
 author = {Wasserman, Arthur G.},
 title = {Equivariant differential topology},
 fjournal = {Topology},
 journal = {Topology},
 issn = {0040-9383},
 volume = {8},
 pages = {127--150},
 year = {1969},
 language = {English},
 doi = {10.1016/0040-9383(69)90005-6},
 zbMATH = {3341712},
 Zbl = {0215.24702}
}

@article{MayerInvariantMorseFunctions,
 author = {Mayer, Karl Heinz},
 title = {G-invariante {Morse}-{Funktionen}. ({G}-invariant {Morse} functions)},
 fjournal = {Manuscripta Mathematica},
 journal = {Manuscr. Math.},
 issn = {0025-2611},
 volume = {63},
 number = {1},
 pages = {99--114},
 year = {1989},
 language = {German},
 doi = {10.1007/BF01173705},
 url = {https://eudml.org/doc/155371},
 zbMATH = {4100211},
 Zbl = {0672.58007}
}

@article{MantoulidisSchoen2015,
  author  = {Mantoulidis, Christos and Schoen, Richard},
  title   = {On the {B}artnik mass of apparent horizons},
  journal = {Classical and Quantum Gravity},
  volume  = {32},
  number  = {20},
  pages   = {205002},
  year    = {2015},
  doi     = {10.1088/0264-9381/32/20/205002},
  url     = {https://doi.org/10.1088/0264-9381/32/20/205002}
}

@misc{BaerHankeLocalFlexibility,
  title={Contractibility of spaces of positive scalar curvature metrics with symmetry}, 
  author={Christian B{\"a}r and Bernhard Hanke},
  year={2026},
  eprint={2503.16232},
  archivePrefix={arXiv},
  url={https://arxiv.org/abs/2503.16232}, 
}

@incollection{BaerHankeBoundaryConditions,
 author = {B{\"a}r, Christian and Hanke, Bernhard},
 title = {Boundary conditions for scalar curvature},
 booktitle = {Perspectives in scalar curvature. Vol. 2},
 isbn = {978-981-12-4999-0; 978-981-12-4935-8; 978-981-12-4937-2},
 pages = {325--377},
 year = {2023},
 publisher = {Singapore: World Scientific},
 language = {English},
 doi = {10.1142/9789811273230_0010},
 zbMATH = {7733276},
 Zbl = {1530.53054}
}

@misc{BamlerKleiner2019,
      title={Ricci flow and contractibility of spaces of metrics}, 
      author={Richard H. Bamler and Bruce Kleiner},
      year={2019},
      eprint={1909.08710},
      archivePrefix={arXiv},
      url={https://arxiv.org/abs/1909.08710}, 
}

@article{Bertelson2002,
 author = {Bertelson, Mélanie},
 title = {A {{\(h\)}}-principle for open relations invariant under foliated isotopies.},
 fjournal = {The Journal of Symplectic Geometry},
 journal = {J. Symplectic Geom.},
 issn = {1527-5256},
 volume = {1},
 number = {2},
 pages = {369--425},
 year = {2002},
 language = {English},
 doi = {10.4310/JSG.2001.v1.n2.a6},
 zbMATH = {1911435},
 Zbl = {1036.53017}
}

@article{LiMantoulidis2023,
 author = {Li, Chao and Mantoulidis, Christos},
 title = {Metrics with {{\(\lambda_1(-\Delta + k R) \ge 0\)}} and flexibility in the {Riemannian} {Penrose} inequality},
 fjournal = {Communications in Mathematical Physics},
 journal = {Commun. Math. Phys.},
 issn = {0010-3616},
 volume = {401},
 number = {2},
 pages = {1831--1877},
 year = {2023},
 language = {English},
 doi = {10.1007/s00220-023-04679-9},
 zbMATH = {7707363},
 Zbl = {1526.53035}
}

@incollection{GromovFourLectures,
 author = {Gromov, Misha},
 title = {Four lectures on scalar curvature},
 booktitle = {Perspectives in scalar curvature. In 2 volumes},
 isbn = {978-981-12-4935-8; 978-981-12-4937-2; 978-981-12-4998-3; 978-981-12-4999-0},
 pages = {1--514},
 year = {2023},
 publisher = {Singapore: World Scientific},
 language = {English},
 doi = {10.1142/9789811273223_0001},
 zbMATH = {7733259},
 Zbl = {1532.53003}
}

@article {Rub,
 author = {Ruberman, Daniel},
 title = {Positive scalar curvature, diffeomorphisms and the {Seiberg}-{Witten} invariants},
 fjournal = {Geometry \& Topology},
 journal = {Geom. Topol.},
 issn = {1465-3060},
 volume = {5},
 pages = {895--924},
 year = {2001},
 language = {English},
 doi = {10.2140/gt.2001.5.895},
 url = {https://eudml.org/doc/122301},
 zbMATH = {1760363},
 Zbl = {1002.57064}
}

\end{document}